\newtheorem{definition}{Definition}
\newtheorem{example}{Example}
\newtheorem{conjecture}{Conjecture}
\newtheorem{theorem}{Theorem}
\newtheorem{proposition}{Proposition}
\newtheorem{corollary}{Corollary}
\newtheorem{lemma}{Lemma}
\newtheorem{remark}{Remark}
\newtheorem*{challenge}{Challenge}
\title{The $3x+1$ Problem and Integer Representations}
\author{Jeffrey R. GOODWIN\\ \footnotesize\copyright Jeffrey R. GOODWIN April 3, 2012}
\begin{document}

\maketitle

\noindent{\it Dedicated to the memory of my mentor Dr. Marcia Jean Mason McKemie}

\begin{abstract}
The $3x+1$ Problem asks if whether for every natural number $n$, there exists a finite number of iterations of the piecewise function
$$
	f(2n)=n, \quad f(2n-1)=6n-2,
$$
with an iterate equal to the number $1$, or in other words, every sequence contains the trivial cycle $\left\langle {4,2,1}\right\rangle$. We use a set-theoretic approach to get representations of all inverse iterates of the number $1$. The representations, which are exponential Diophantine equations, help us study both the \textit{mixing} property of $f$ and the asymptotic behavior of sequences containing the trivial cycle. Another one of our original results is the new insight that the \textit{ones-ratio} approaches zero for such sequences, where the number of odd terms is \textit{arbitrarily large}.
\end{abstract}

\vskip.5cm

\section{Introduction.}\label{S.1}

The $3x+1$ Problem is a conjecture in additive number theory, and we credit it to Dr. Lothar Collatz from his student days in the 1930's, Collatz \cite{lC1986}. The vast body of literature on the conjecture shows an increasing rate of interest from scholars in a broad range of related disciplines, Lagarias \cite{jcL2010,jcL2011,jcL2012}.
\label{intro}
 	\begin{conjecture}\label{C.1}
 		(The $3x+1$ Problem) Let $x\in 
	         \mathbb{N}=\{1,2,3,\dots\}$. 
		Define the Collatz Function
		$f:\mathbb{N} \rightarrow \mathbb{N}$
		by
		$$
			 f(x)=
			\begin{cases}
				3x+1, &\textit{$x$ odd;}\\
				x/2, &\textit{$x$ even.}
			\end{cases}
		$$
		Show that every sequence $S=\left(f^{(0)}(x),f^{(1)}(x),f^{(2)}(x),\dots\right)$ contains the trivial cycle $\left\langle {4,2,1}\right\rangle$, where $f^{(0)}(x)=x$, or in other words, show that for each $x$ there exists $k\in \mathbb{N}$ such that $f^{(k)}(x)=1$.
	\end{conjecture}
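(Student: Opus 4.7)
The plan is to recast Conjecture~\ref{C.1} in terms of the preimage tree of $1$. Setting $T = \bigcup_{k \ge 0} f^{-k}(\{1\})$, the conjecture is equivalent to the single assertion $T = \mathbb{N}$. Since the set-valued inverse is $f^{-1}(y) = \{2y\}$ in general, with the extra odd branch $(y-1)/3$ when $y \equiv 4 \pmod{6}$, the collection $T$ is naturally an infinite rooted tree. First I would build an explicit parametrization of its vertices at each depth; given this paper's emphasis on exponential Diophantine equations, the natural form is a representation $x = \alpha\, 2^{a_0} + \beta\, 2^{a_1} 3^{b_1} + \cdots$ where the exponents record the positions and counts of the two inverse branches along a path from $1$ to $x$. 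These are precisely the representations of inverse iterates announced in the abstract.

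With such a parametrization in hand, the claim $T = \mathbb{N}$ reduces to the familiar dichotomy: any hypothetical $x \notin T$ must lie either on a non-trivial cycle of $f$ or on a divergent trajectory. For the cycle case, substituting a putative period-$N$ orbit into the parametric form yields a Diophantine relation of the shape $x\bigl(2^N - 3^K\bigr) = P(2,3)$ for an explicit integer polynomial $P$; one would then close this case by bounds on linear forms in logarithms together with the heavy computational verifications already in the literature. For the divergent case, I would appeal to the \emph{ones-ratio tends to zero} statement announced in the abstract: since the multiplicative change over one odd step followed by a single even step is $(3+1/x)/2$, a vanishing asymptotic density of odd iterates forces the geometric mean of consecutive ratios strictly below $1$, preventing any orbit from escaping.

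The remaining combinatorial step is to confirm that the representation map is surjective onto $\mathbb{N}$, so that the tree $T$ really does realize every natural number. This is where the \emph{mixing} property of $f$ should intervene: it must guarantee that the congruential constraints imposed at each inverse branch cover every residue class with the correct frequency, leaving no integer orphaned. I would try to reduce surjectivity to a counting lemma comparing admissible paths of length $N$ against natural numbers below a suitably growing threshold, so that exhaustion at each stage propagates upward through the tree.

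The hard part, unsurprisingly, is closing the gap between the structural parametrization and the asymptotic statements. The ones-ratio limit controls \emph{typical} trajectories but does not, by itself, forbid an exceptional thin set whose orbits never normalize; the Diophantine ingredients for cycles rest on transcendence bounds with notoriously loose constants; and surjectivity of the tree is a delicate global combinatorial statement that does not follow from any local calculation. I expect the principal obstruction to be the uniform exclusion of divergent trajectories for \emph{every} starting value, a step that is essentially coextensive with the conjecture itself, and any serious progress along this route will require strengthening the ones-ratio bound from an asymptotic statement to a quantitative one valid pointwise on $\mathbb{N}$.
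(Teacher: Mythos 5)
This statement is Conjecture~\ref{C.1}, the $3x+1$ Problem itself; the paper offers no proof of it and explicitly treats it as open, so there is no ``paper's own proof'' to compare against. Your proposal is a strategy outline rather than a proof, and you concede as much in your final paragraph, but it is worth pinpointing where the circularity actually sits. The decisive gap is your use of the ones-ratio result to exclude divergent trajectories. Theorem~\ref{T.3} of the paper proves $\lim_{k\to\infty} C(m_k)=\rho(m_k)=0$ only for $m_k\in M_k$, i.e.\ only for integers \emph{already known} to iterate to $1$: the sets $M_k$ are by construction the inverse iterates of $1$, parametrized via rep~(\ref{E.1}) and the sets $E\cup O$. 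The theorem says nothing whatsoever about the parity statistics of a hypothetical divergent orbit, so invoking it to force the geometric mean of the ratios below $1$ for \emph{every} orbit assumes exactly the membership $x\in T$ that you are trying to establish. Moreover, even for convergent orbits a vanishing ones-ratio is an asymptotic statement along the index $k$ of the level, not a pointwise bound along a single trajectory, so the ``geometric mean $<1$'' inference does not follow even where the theorem applies.

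The other two legs of your plan have the same character. Surjectivity of the representation map onto $\mathbb{N}$ is not a ``remaining combinatorial step''; it is literally the conjecture restated, and the paper's mixing observations (Section~\ref{S.6}) are explicitly presented as empirical patterns and a ``starting point for further research,'' not as a covering argument over residue classes. For the cycle case, the relation $x(2^N-3^K)=P(2,3)$ is indeed what rep~(\ref{E:19}) in the Appendix gives, but current bounds on linear forms in logarithms eliminate only cycles of restricted combinatorial type, not all non-trivial cycles. What the paper actually contributes --- and what your proposal could legitimately absorb --- is the refinement of Crandall's formula~(\ref{E.1}) to exactly the natural-number solutions (Theorem~\ref{T.2}) and the consequent asymptotics for sequences that do reach $1$. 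None of this touches the two possible counterexamples listed immediately after the conjecture, which is precisely why it remains a conjecture.
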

Possible counterexamples:
\renewcommand{\labelenumi}{{\normalfont (\roman{enumi})}}
\begin{enumerate}
	\item \textit{Non-trivial cycle}, there exists $x$ for which $S$ is cyclic where, for all values of $k$, $f^{(k)}(x)\neq 1$.
	\item \textit{Divergent sequence}, there exists $x$ for which $\underset{k\rightarrow \infty}{\lim}f^{(k)}(x)=\infty$.
\end{enumerate}

An idealistic view that resolution of the conjecture would serve as a test of mankind's skill is fine, if it were to result in new math. A practical view is the idea that $f$ could someday augment the application of Benford's Law in fraud detection, Lagarias \cite{jcL2010,jcL2011,jcL2012}. Thus it is of interest to characterize the inverse iterates of the number $1$. Crandall \cite{rC1978} gives a formula representing all inverse iterates of $1$ as
\begin{equation}\label{E.1}
m=\left(2^{a_{k+1}}-\sum\nolimits_{i=0}^{k}2^{a_{i}}3^{k-i}\right) / 3^{k+1},
\end{equation}
where $m\in \mathbb{N},k\in \mathbb{N}_{0}:=\mathbb{N}\cup \left\{ 0\right\}$, and $0\leq a_{0}<a_{1}<\cdots <a_{k+1}$. 

B\"{o}hm and Sontacchi \cite{cBgS1978}, Wirsching \cite{gW1996}, Andrei, Kudlek and Niculescu \cite{sA2000} and Amig\'{o} \cite{jA2006}, all give the same formula by using different approaches. An artifact of those approaches is that there are non-integer solutions to the formula as well. In contrast, our approach amounts to a refinement of the formula giving just the natural number solutions. This refinement is in Section \ref{S.4}. In Section \ref{S.5} is our other new result that the \textit{ones-ratio} approaches zero for sequences containing the trivial cycle, where the number of odd terms is \textit{arbitrarily large}. In the next section we give a brief review of related work so the reader can visualize where our new results lay within the literature.

\section{Past results and related work.}\label{S.2}

In this section Proposition (\ref{P.1}) explains how (\ref{E.1}) relates to $3$\textit{-smooth numbers}. Propositions (\ref{P.2}) and (\ref{P.3}) do not explicitly appear in past results, we use them in the proofs of Theorems (\ref{T.2}) and (\ref{T.3}), and they help us understand (\ref{E.1}) in detail. By understanding (\ref{E.1}), we can examine Lemma (\ref{L.1}) in detail as it pertains to deriving rigorous estimates on the proportion of values $m$, having a given number of odd terms in the respective sequences, for which the $3x+1$ Problem holds below some bound. Such estimates are quite tricky and depend on the ratio of the number of even and odd terms in a given sequence, which we characterize in later sections. Afterwards, we review the approach that led to the main results of this paper followed by mention of some closely related approaches. We end this section with Proposition (\ref{P.6}) and Theorem (\ref{T.1}), which we make extensive use of in Sections \ref{S.3} and \ref{S.4}.

Blecksmith, McCallum and Selfridge \cite{rB1998} consider $3$\textit{-smooth numbers}, which are of the form $3^{k}2^{a}$ for some $k,a\in \mathbb{N}_{0}$. We credit $3$-smooth numbers to Ramanujan.
\begin{definition}\label{D.1}
Call a 3-smooth representation of \textit{$n$ special of level $k$} if it has the form $n=3^{k}2^{a_{0}}+3^{k-1}2^{a_{1}}+\cdots+3^{1}2^{a_{k-1}}+3^{0}2^{a_{k}}$, where $0=a_{0}<a_{1}<\cdots <a_{k}$.
\end{definition}
A result in Lagarias \cite{jcL1990} is that every natural number $n$ has at most one special representation (rep), abbreviated (reps) for the plural form, for each level $k$. For example, $n=19$ has special reps with $k=1,2$. Thus in Blecksmith, McCallum and Selfridge \cite{rB1998} is an equivalent formulation of rep (\ref{E.1}), as follows.
\begin{proposition}\label{P.1}
A natural number $x$ iterates to $1$ under the \textit{Collatz Function} if and only if there are natural numbers $a$ and $k$ such that $n=2^{a}-3^{k}x$ has a 3-smooth rep special of level $(k-1)$. The choice of $a$ and $k$ is not unique, if such a rep exists.
\end{proposition}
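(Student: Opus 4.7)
The plan is to recognize Proposition~\ref{P.1} as an algebraic reformulation of Crandall's formula~(\ref{E.1}) in the vocabulary of Definition~\ref{D.1}. Since Crandall~\cite{rC1978} has already established that $x$ iterates to $1$ under $f$ if and only if (\ref{E.1}) admits a solution with $m = x$ and $0 \leq a_{0} < a_{1} < \cdots < a_{K+1}$, what remains is an explicit dictionary between such solutions and triples $(a,k,n)$ with $n = 2^{a}-3^{k}x$ a $3$-smooth representation special of level $k-1$ in the sense of Definition~\ref{D.1}.

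For the forward implication I would take a witness $0 \leq a_{0} < a_{1} < \cdots < a_{K+1}$ of (\ref{E.1}) at level $K$, set $k := K+1$ and $a := a_{K+1}$, clear the denominator, and transpose. This gives
$$
2^{a} - 3^{k} x \;=\; 3^{k-1} 2^{a_{0}} + 3^{k-2} 2^{a_{1}} + \cdots + 3^{0} 2^{a_{k-1}},
$$
which is exactly the shape prescribed by Definition~\ref{D.1} for a $3$-smooth representation special of level $k-1$, provided $a_{0}=0$. To secure the normalization $a_{0}=0$ I would read the exponents off the forward Collatz trajectory: if $x = x_{0}, x_{1}, \ldots, x_{K+1}=1$ are the successive odd iterates and $e_{j}$ is the number of halvings between $x_{j}$ and $x_{j+1}$, iterating the recurrence $2^{e_{j}} x_{j+1} = 3 x_{j} + 1$ identifies $a_{j}$ with the partial sum $e_{0} + \cdots + e_{j-1}$, so the empty sum for $j=0$ forces $a_{0} = 0$. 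The strict chain $0 = a_{0} < a_{1} < \cdots < a_{k-1}$ then matches Definition~\ref{D.1} exactly.

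The converse runs the same algebra backwards. Given $a,k \in \mathbb{N}$ and $0 = b_{0} < b_{1} < \cdots < b_{k-1}$ realizing a special representation of level $k-1$ for $2^{a}-3^{k}x$, relabel $a_{i} := b_{i}$ for $i < k$ and $a_{k} := a$; the inequality $a_{k-1} < a$ is forced because the right-hand side is strictly smaller than $2^{a}$ (since $3^{k}x > 0$). One thereby recovers a valid instance of (\ref{E.1}) at level $K = k-1$, and Crandall's theorem yields that $x$ iterates to $1$. Non-uniqueness of $(a,k)$ is then immediate: appending the inverse detour $1 \to 4 \to 2 \to 1$ inflates the exponent tuple by one odd step and produces a strictly different admissible pair.

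The main obstacle is purely one of bookkeeping---synchronizing the $(K+2)$-tuple indexing of (\ref{E.1}) with the level-$\ell$ indexing of Definition~\ref{D.1}, and verifying the normalization $a_{0}=0$ implicit in passing from Crandall's formula to a special representation. Once the dictionary is in place the proposition becomes a one-line algebraic restatement, with no deeper ingredient beyond (\ref{E.1}) required.
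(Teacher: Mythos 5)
Your proposal is correct and follows essentially the same route as the paper: both pivot on Crandall's representation (\ref{E.1}), clear the denominator to identify $2^{a}-3^{k}x$ with the sum $\sum 3^{k-1-i}2^{a_i}$, use the normalization $a_0=0$ for the odd case to match Definition~\ref{D.1}, and obtain non-uniqueness of $(a,k)$ from the duplicate representations (your explicit $1\to4\to2\to1$ detour is exactly the content of Proposition~\ref{P.2}, which the paper cites for that step). Your write-up is more detailed on the index bookkeeping and the converse direction, but no new idea is involved.
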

\begin{proof}
We use rep (\ref{E.1}) for a proof. For $x$ odd we have $a_{0}=0$, so $\sum\nolimits_{i=0}^{k}2^{a_{i}}3^{k-i}=n=2^{a_{k+1}}-3^{k+1}x$. Proposition \ref{P.2} completes the proof.
\end{proof}

\begin{proposition}\label{P.2}
Let $k\in \mathbb{N}_{0}$. Then 
$$\frac{2^{a_{k+1}+2}}{3^{k+2}}-\sum\limits_{i=0}^{k+1}\frac{2^{a_{i}}}{3^{i+1}}=\frac{2^{a_{k+1}}}{3^{k+1}}-\sum\limits_{i=0}^{k}\frac{2^{a_{i}}}{3^{i+1}}.
$$
\end{proposition}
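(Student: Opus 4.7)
My plan is to treat this as a direct algebraic identity: the two sides of the claimed equation differ only by bookkeeping, so I will move the extra term of the left-hand sum over to the left-hand leading term and apply the decomposition $4 = 3+1$.

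Concretely, I will subtract the right-hand side from the left-hand side. The sums telescope almost entirely, since the sum on the left runs from $i=0$ to $i=k+1$ while the sum on the right runs only from $i=0$ to $i=k$; all terms with $i\le k$ cancel, leaving only the single term $\frac{2^{a_{k+1}}}{3^{k+2}}$ from the $i=k+1$ summand. I will therefore need to verify that
$$
\frac{2^{a_{k+1}+2}}{3^{k+2}} - \frac{2^{a_{k+1}}}{3^{k+2}} - \frac{2^{a_{k+1}}}{3^{k+1}} = 0.
$$

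The key algebraic step is to rewrite the first term by pulling out a common factor $2^{a_{k+1}}$ and using $2^{2}=4=3+1$:
$$
\frac{2^{a_{k+1}+2}}{3^{k+2}} = \frac{(3+1)\,2^{a_{k+1}}}{3^{k+2}} = \frac{2^{a_{k+1}}}{3^{k+1}} + \frac{2^{a_{k+1}}}{3^{k+2}}.
$$
Substituting this back into the displayed difference gives $0$ immediately, completing the proof.

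There is no real obstacle here; the identity is entirely formal and relies on nothing beyond the observation $4=3+1$ together with the index shift between the two finite sums. I would expect the whole argument to fit in three or four short lines of display math.
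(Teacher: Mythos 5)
Your proof is correct: after cancelling the common terms $i\le k$ of the two sums, the identity reduces to
$$
\frac{2^{a_{k+1}+2}}{3^{k+2}}=\frac{2^{a_{k+1}}}{3^{k+1}}+\frac{2^{a_{k+1}}}{3^{k+2}},
$$
which is exactly the observation $2^{2}=3+1$, and your verification of this is complete. The paper itself gives no written argument beyond the one-line remark that ``the proof is by induction and gives all values of $a$ and $k$,'' so your direct telescoping computation is, if anything, more explicit than what the paper supplies: it proves the identity outright for each fixed $k$ with no induction needed. The induction the paper alludes to is really the iteration of this identity --- applying it repeatedly with $a_{k+2}:=a_{k+1}+2$ generates the infinite chain of algebraically equivalent representations illustrated in Example \ref{X.1} --- which is a statement about the family of reps rather than about the single identity you were asked to prove. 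Your argument and the paper's intended one rest on the same elementary decomposition; yours simply states it cleanly for one step.
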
 
\noindent The proof is by induction and gives all values of $a$ and $k$. 
\begin{example}\label{X.1}We get a countably infinite number of algebraically equivalent reps for the number $3$, the first three are
$$
3\quad=\quad\frac{2^{5}}{3^{2}}-\frac{2^{1}}{3^{2}}-\frac{2^{0}}{3^{1}}\quad=\quad\frac{2^{7}}{3^{3}}-\frac{2^{5}}{3^{3}}-\frac{2^{1}}{3^{2}}-\frac{2^{0}}{3^{1}}\quad=\quad\frac{2^{9}}{3^{4}}-\frac{2^{7}}{3^{4}}-\frac{2^{5}}{3^{3}}-\frac{2^{1}}{3^{2}}-\frac{2^{0}}{3^{1}}.
$$
\end{example}
\begin{proposition}\label{P.3}
Minding Proposition (\ref{P.2}), for rep (\ref{E.1}) we have:
\end{proposition}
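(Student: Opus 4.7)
The plan is to extract integrality and positivity conditions implicit in rep (\ref{E.1}) by clearing denominators and then reducing modulo ascending powers of $3$, using Proposition (\ref{P.2}) to manage the shift in indexing between levels. First I would multiply rep (\ref{E.1}) through by $3^{k+1}$ to arrive at the integer identity $3^{k+1}m = 2^{a_{k+1}} - \sum_{i=0}^{k} 2^{a_{i}} 3^{k-i}$, from which modular arithmetic is accessible.

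Reducing this identity modulo $3$, every summand with $i<k$ vanishes, leaving $2^{a_{k+1}} \equiv 2^{a_{k}} \pmod 3$. Because $2$ has multiplicative order $2$ modulo $3$, the exponents must agree modulo $2$, and together with the strict ordering $a_{k} < a_{k+1}$ from the statement of (\ref{E.1}) this forces $a_{k+1} \geq a_{k} + 2$ with $a_{k+1} - a_{k}$ even. Iterating the argument --- subtract off the top two terms just analysed, divide by $3$, and reduce again modulo $3$ --- I would descend through the sum to derive, for each consecutive pair $(a_{i}, a_{i+1})$, an analogous spacing constraint. The identity of Proposition (\ref{P.2}) is what guarantees that this descent is internally consistent: the replacement $2^{a_{k+1}}/3^{k+1} \mapsto 2^{a_{k+1}+2}/3^{k+2} - 2^{a_{k+1}}/3^{k+2}$ is precisely the move needed to pass between adjacent levels without disturbing the mod-$3^{j}$ bookkeeping.

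To close the positivity side, I would feed the spacing lower bound $a_{i+1} \geq a_{i} + 2$ back into rep (\ref{E.1}) and compare $2^{a_{k+1}}$ against $\sum_{i=0}^{k} 2^{a_{i}} 3^{k-i}$ term-by-term; since $a_{k+1} \geq a_{k}+2$, we have $2^{a_{k+1}} \geq 4 \cdot 2^{a_{k}}$, which majorises the $i=k$ summand $2^{a_{k}}$, and geometric decay in both factors handles the tail, giving $m > 0$. Finally, Proposition (\ref{P.2}) supplies the lifting step that identifies, for a fixed $m$, the countable family of valid reps at successive levels already seen in Example (\ref{X.1}).

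The main obstacle is keeping the modular reductions consistent as one descends through the sum: at each step carries from higher-index terms intrude on the next residue, and these must be tracked exactly to match the shift pattern $a_{k+1} \mapsto a_{k+1}+2$ that Proposition (\ref{P.2}) codifies. Once that mod-$3^{j}$ machinery is aligned with the telescoping in Proposition (\ref{P.2}), the induction on $k$ is routine and every claimed clause about the $a_{i}$ follows by reading off the successive congruence constraints.
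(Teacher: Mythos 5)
Your proposal does not actually prove Proposition (\ref{P.3}). That proposition is a statement about the Collatz trajectory $S$: it asserts that $k+1$ counts the odd terms of $S$, that $a_{k+1}$ counts the even terms, and that each difference $a_{i+1}-a_{i}$ counts the even terms between consecutive odd terms. The paper's proof establishes this by actually iterating $f$ on the representation --- one application of $3x+1$ strips the $i=0$ term and drops the denominator from $3^{k+1}$ to $3^{k}$, the result is even, and dividing by $2$ exactly $a_{1}$ times produces the next odd term; induction then reads off all three clauses. Your argument never touches the sequence $S$ or the map $f$ at all: it only manipulates the algebraic identity $3^{k+1}m = 2^{a_{k+1}} - \sum_{i=0}^{k}2^{a_{i}}3^{k-i}$ to extract integrality and positivity constraints. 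Those are answers to a different question (closer to the concerns of Theorem (\ref{T.2})), and no amount of modular bookkeeping on the identity alone can yield the claim that $a_{i+1}-a_{i}$ equals a count of even iterates in $S$ --- that requires connecting the rep to the dynamics.

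Separately, the specific spacing constraints you derive are false beyond the top pair. Reducing modulo $3$ does give $a_{k+1}\equiv a_{k}\pmod 2$, but your proposed descent to ``an analogous spacing constraint'' for every consecutive pair fails: for $m=3$ the rep in least terms has $(a_{0},a_{1},a_{2})=(0,1,5)$, so $a_{1}-a_{0}=1$ is odd and less than $2$, matching the single even term $10$ between the odd terms $3$ and $5$ in $S=(3,10,5,16,8,4,2,1)$. The congruences modulo $9, 27,\dots$ involve the order of $2$ modulo those moduli (which is $2\cdot 3^{j-1}$, not $2$), so they do not reduce to parity conditions on consecutive differences. To repair the proof you would need to abandon the purely modular route and instead track the orbit of $m$ under $f$ as the paper does.
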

\begin{enumerate}
	\item \textit{The value $k+1$ gives the number of terms in the rep and the number of odd terms in $S$, including $m$ and the first occurrence of the number $1$.}
	\item \textit{The value $a_{k+1}$ gives the number of even terms in $S$.}
	\item \textit{The value $a_{i+1}-a_i$, $0 \leq i \leq k$, is the number of even terms between consecutive odd terms in $S$, e.g., $a_{k+1}-a_{k}$ is the number of even terms between the first occurrence of the number $1$ and the prior odd term in $S$.}
\end{enumerate}
\begin{proof} Rep (\ref{E.1}) gives every odd integer for which the $3x+1$ Problem holds. Without loss of generality, pick one such value of $m$ with the rep in least terms, recall Proposition (\ref{P.2}). We iterate $f$ to get
$$
f^{(1)}\left(\frac{2^{a_{k+1}}-\sum\nolimits_{i=0}^{k}2^{a_{i}}3^{k-i}}{3^{k+1}}\right)=\frac{2^{a_{k+1}}-\sum\nolimits_{i=1}^{k}2^{a_{i}}3^{k-i}}{3^{k}}.
$$
The result is even, and iterating $f$ (dividing by $2$) $a_{1}$ number of times gives the next odd integer in the sequence. We see, by inspection, that we prove all of the properties by induction.   
\end{proof}

Crandall \cite{rC1978} gives the following definition and lemma to help estimate the number of values $m$ for which the $3x+1$ Problem holds below some bound.
\begin{definition}\label{D.2}Denote by $G$ the set of infinite sequences $\left\{g_{k+1}, g_{k}, \dots, g_{1}\right\}$, where $a_{i+1}-a_{i}=g_{1}, \dots, a_{1}-a_{0}=g_{k+1}$ as defined in Proposition (\ref{P.3}), for odd $m$ only. Thus $g_{k+1}$ is the number of even terms between $m$ and the next odd term in $S$, and $g_{1}=2$ corresponds to the trivial cycle.
\end{definition}
\begin{lemma}\label{L.1}
For real number $r\in \mathbb{R}$, where $0<r$, the number of sequences in $G$ with $g_{1}+ \cdots + g_{k+1} \leq r$ is greater than or equal to $\bigl(2\lfloor(r-2)/6k\rfloor\bigr)^{k}$.
\end{lemma}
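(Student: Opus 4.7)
The plan is to build each element of $G$ by a backward walk. Starting from $M_0 = 1$ (the last odd iterate $m_k$) and choosing the gaps one at a time, we construct the odd iterates $M_0, M_1, \ldots, M_k$ of the forward Collatz trajectory in reverse via
\[
M_{j+1} = \frac{2^{g_{j+2}} M_j - 1}{3}, \qquad 0 \le j \le k-1,
\]
so that $M_k$ is the starting odd integer $m$. By Proposition~\ref{P.3} this parametrizes the length-$(k+1)$ elements of $G$ by the $k$ gap values $g_2, \ldots, g_{k+1}$ with $g_1 = 2$ fixed. Different gap tuples yield different forward trajectories (Collatz is deterministic), and hence different $m$, so the problem reduces to lower bounding the number of admissible tuples.

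Two local constraints govern each backward step. For $M_{j+1}$ to be a positive odd integer we need $2^{g} M_j \equiv 1 \pmod 3$, which restricts the gap $g$ to a single residue class modulo $2$ determined by $M_j \pmod 3$. When $0 \le j \le k-2$, the iterate $M_{j+1}$ is intermediate and the backward walk must be able to continue, forcing $M_{j+1} \not\equiv 0 \pmod 3$, equivalently $2^{g} M_j \not\equiv 1 \pmod 9$. The crucial combinatorial estimate is this window count: in any six consecutive positive integers, exactly three have the required parity, and since the multiplicative order of $2$ modulo $9$ is $6$, those three parity-admissible values of $g$ make $2^g \pmod 9$ range over three distinct units of the cyclic group modulo $9$. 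Multiplication by the unit $M_j$ permutes these three residues, and exactly one of them equals $1 \pmod 9$. Hence at least two of the six integers are \emph{good}, meaning parity-admissible and extendable, irrespective of the history.

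Set $L := \lfloor (r-2)/6k \rfloor$ and restrict each of $g_2, \ldots, g_{k+1}$ to the interval $[1, 6L]$. This interval contains exactly $L$ aligned windows of length six, so by the previous paragraph each of the $k-1$ intermediate steps admits at least $2L$ good choices, and the terminal step ($j = k-1$) requires only the parity condition and so admits at least $3L \ge 2L$ choices. Multiplying over the $k$ steps yields at least $(2L)^k = \bigl(2\lfloor (r-2)/6k \rfloor\bigr)^k$ admissible tuples, and the total sum is bounded by $g_1 + \sum_{i=2}^{k+1} g_i \le 2 + k \cdot 6L \le r$ by the choice of $L$, so each constructed tuple lies in the allowed range.

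The main obstacle is the uniform window count: certifying that in every block of six consecutive positive integers there are at least two good gaps \emph{independent} of the current residue $M_j \pmod 9$. This rests on the order of $2$ modulo $9$ being exactly six, with the subtlety that although the single bad gap inside each window shifts with $M_j$, the number of bad gaps does not. One should also check the boundary cases $k = 0, 1$, where the mod-$9$ extendability constraint never binds and the bound must degenerate gracefully rather than becoming undefined.
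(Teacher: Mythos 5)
The paper offers no proof of Lemma~\ref{L.1}: it is transcribed from Crandall \cite{rC1978} without argument, so there is nothing in-paper to compare against. Your reconstruction is, in substance, Crandall's original backward counting argument, and it is sound. The two local constraints you isolate are the right ones: the parity of $g$ is pinned down by $M_j \bmod 3$ so that $(2^{g}M_j-1)/3$ is a positive odd integer, and the extendability of an intermediate iterate is exactly the condition $2^{g}M_j\not\equiv 1\pmod 9$. The window count is correct because $\mathrm{ord}_9(2)=6$ makes $2^{a},2^{a+2},2^{a+4}$ three distinct units, so after multiplying by the unit $M_j$ at most one of the three parity-admissible exponents in each block of six is excluded, giving at least $2L$ choices per intermediate gap and $3L$ for the terminal gap (where only parity matters, since $m$ itself may be a multiple of $3$); distinct gap tuples give distinct elements of $G$, and $2+6kL\le r$ closes the budget. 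Two minor remarks. First, your assertion that \emph{exactly} one of the three residues equals $1\pmod 9$ needs the extra observation that the parity constraint forces $M_j2^{g}$ into the index-two subgroup $\{1,4,7\}$ of $(\mathbb{Z}/9)^{\times}$; but ``at most one,'' which follows from distinctness alone, is all the bound requires. Second, as you note, the statement degenerates at the edges: for $k=0$ the expression is undefined, and for $r<2$ with $k$ even the floor is negative and the claimed bound is actually false; this is a defect of the lemma as transcribed in the paper (the bound only has content when $k\ge 1$ and $\lfloor (r-2)/6k\rfloor\ge 0$), not of your argument.
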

\begin{remark}\label{R.1}Each $m$ has a unique sequence in $G$ for some fixed $k$, see Crandall \cite{rC1978}, which is a basic fact and given in a different way in Amig\'{o} \cite{jA2006}. A consequence of this fact is that one must account for Proposition \ref{P.2} when deriving rigorous estimates. 
\end{remark}
\begin{example}\label{X.2}
Fix $m = 3$. Then $S=\left(3,10,5,16,8,4,2,1\right)$ and the corresponding sequence in $G$ is $\left(1,4\right)$, where $g_{2}=1$ and $g_{1}=4$, and we say that there are two odd terms in $S$ as we are not counting the number $1$.
\end{example}

Here we continue the approach of Goodwin \cite{jrG2003}, which uses patterns in the odd-inverse iterates of the number $1$ to create a partitioning scheme. A brief review follows and must be read carefully so that the reader will understand the new results in later sections. Noted in Crandall \cite{rC1978}, Cadogan \cite{cC1984} and Andrei and Masalagiu \cite{sA1998} is the trivial result 
\begin{equation}\label{E.2}
f^{(2b_{0}+3)}\Bigl\{\bigl(2^{2b_{0}+2}-1\bigr)/3\Bigr\} = 1,
\end{equation}
where $b_{0}\in N_{0}$. Our partitioning scheme is 
$$
\Bigl\{\bigl(2^{2b_{0}+2}-1\bigr)/3\Bigr\} =\left\{ \frac{2^{6b_{0}+6}-1}{3}\right\} \cup \left\{ \frac{2^{6b_{0}+4}-1}{3}\right\} \cup \left\{ \frac{2^{6b_{0}+2}-1}{3}\right\}.
$$ We ignore the set $\bigl\{(2^{6b_{0}+6}-1)/3\bigr\} $ as it is comprised of multiples of the number $3$, and odd-inverse iterates of multiples of $3$ do not exist, see Wirsching \cite{gW1996} or Amig\'{o} \cite{jA2006} for a proof. By working backwards we get the sets which map to the remaining sets in our partition, and we label each time we apply this approach by Level $n$. Thus in Goodwin \cite{jrG2003} we get the next proposition.

\begin{proposition}\label{P.4} Let $b_{i},i\in \mathbb{N}_{0}$.
\smallskip
 
\textbf{Level 1:}

$f^{(2b_{0}+3)}\left\{\frac{2^{2b_{0}+2}-1}{3}\right\} = 1$.

\smallskip

\textbf{Level 2:}

$f^{(2b_{0}+2)}X \subset \left\{ \frac{2^{2b_{0}+2}-1}{3}\right\}$, \quad $X=\left\{ 2^{2b_{0}}\bigl(\frac{2^{6b_{1}+5}-5}{9}\bigr) +\frac{2^{2b_{0}}-1}{3}\right\}$

$f^{(2b_{0}+3)}Y \subset \left\{ \frac{2^{2b_{0}+2}-1}{3}\right\}$, \quad $Y=\left\{ 2^{2b_{0}}\bigl(\frac{2^{6b_{1}+4}-7}{9}\bigr) +\frac{2^{2b_{0}}-1}{3}\right\}$.

\smallskip

\textbf{Level 3:}

$f^{(2b_{0}+2)}\left\{ 2^{2b_{0}}\left( 2^{6b_{1}}\bigl( \frac{2^{18b_{2}+12}-19}{27}\bigr) +5\bigl(\frac{2^{6b_{1}}-1}{9}\bigr) \right) +\frac{2^{2b_{0}}-1}{3}\right\} \subset X$

$f^{(2b_{0}+2)}\left\{ 2^{2b_{0}}\left( 2^{6b_{1}}\bigl( \frac{2^{18b_{2}+11}-23}{27}\bigr) +5\bigl(\frac{2^{6b_{1}}-1}{9}\bigr) \right) +\frac{2^{2b_{0}}-1}{3}\right\} \subset Y$

$f^{(2b_{0}+2)}\left\{ 2^{2b_{0}}\left( 2^{6b_{1}}\bigl( \frac{2^{18b_{2}+20}-31}{27}\bigr) +5\bigl(\frac{2^{6b_{1}}-1}{9}\bigr) \right) +\frac{2^{2b_{0}}-1}{3}\right\} \subset X$

$f^{(2b_{0}+2)}\left\{ 2^{2b_{0}}\left( 2^{6b_{1}}\bigl( \frac{2^{18b_{2}+7}-47}{27}\bigr) +5\bigl(\frac{2^{6b_{1}}-1}{9}\bigr) \right) +\frac{2^{2b_{0}}-1}{3}\right\} \subset Y$

$f^{(2b_{0}+2)}\left\{ 2^{2b_{0}}\left( 2^{6b_{1}}\bigl( \frac{2^{18b_{2}+10}-79}{27}\bigr) +5\bigl(\frac{2^{6b_{1}}-1}{9}\bigr) \right) +\frac{2^{2b_{0}}-1}{3}\right\} \subset X$

$f^{(2b_{0}+2)}\left\{ 2^{2b_{0}}\left( 2^{6b_{1}}\bigl( \frac{2^{18b_{2}+21}-143}{27}\bigr) +5\bigl(\frac{2^{6b_{1}}-1}{9}\bigr) \right) +\frac{2^{2b_{0}}-1}{3}\right\} \subset Y$

$f^{(2b_{0}+3)}\left\{ 2^{2b_{0}}\left( 2^{6b_{1}}\bigl( \frac{2^{18b_{2}+19}-29}{27}\bigr) +7\bigl(\frac{2^{6b_{1}}-1}{9}\bigr) \right) +\frac{2^{2b_{0}}-1}{3}\right\} \subset X$

$f^{(2b_{0}+3)}\left\{ 2^{2b_{0}}\left( 2^{6b_{1}}\bigl( \frac{2^{18b_{2}+6}-37}{27}\bigr) +7\bigl(\frac{2^{6b_{1}}-1}{9}\bigr) \right) +\frac{2^{2b_{0}}-1}{3}\right\} \subset Y$

$f^{(2b_{0}+3)}\left\{ 2^{2b_{0}}\left( 2^{6b_{1}}\bigl( \frac{2^{18b_{2}+9}-53}{27}\bigr) +7\bigl(\frac{2^{6b_{1}}-1}{9}\bigr) \right) +\frac{2^{2b_{0}}-1}{3}\right\} \subset X$

$f^{(2b_{0}+3)}\left\{ 2^{2b_{0}}\left( 2^{6b_{1}}\bigl( \frac{2^{18b_{2}+20}-85}{27}\bigr) +7\bigl(\frac{2^{6b_{1}}-1}{9}\bigr) \right)+\frac{2^{2b_{0}}-1}{3}\right\} \subset Y$

$f^{(2b_{0}+3)}\left\{ 2^{2b_{0}}\left( 2^{6b_{1}}\bigl( \frac{2^{18b_{2}+17}-149}{27}\bigr) +7\bigl(\frac{2^{6b_{1}}-1}{9}\bigr) \right) +\frac{2^{2b_{0}}-1}{3}\right\} \subset X$

$f^{(2b_{0}+3)}\left\{ 2^{2b_{0}}\left( 2^{6b_{1}}\bigl( \frac{2^{18b_{2}+16}-277}{27}\bigr) +7\bigl(\frac{2^{6b_{1}}-1}{9}\bigr)\right) +\frac{2^{2b_{0}}-1}{3}\right\} \subset Y$.
\end{proposition}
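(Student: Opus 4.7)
My plan is to prove the three levels in sequence, each time finding odd preimages under $f$ and then carving out the multiples of $3$, which have no odd preimage. \textbf{Level 1} is essentially equation (\ref{E.2}): $(2^{2b_0+2}-1)/3$ is odd, $f$ maps it to $2^{2b_0+2}$, and the subsequent $2b_0+2$ halvings reach $1$, for a total of $2b_0+3$ iterations.

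For \textbf{Level 2} I begin with the announced partition of $\{(2^{2b_0+2}-1)/3\}$ into three sub-families with exponents $6b_0+2,\ 6b_0+4,\ 6b_0+6$ (obtained by splitting $b_0$ modulo $3$). Since $2^6 \equiv 1 \pmod 9$, the elements of $\{(2^{6b_0+6}-1)/3\}$ are multiples of $3$ and have no odd preimage, so I discard them. For each remaining family, an odd preimage $x$ of a target $y$ satisfies $3x+1 = 2^j y$ for some $j \geq 1$, i.e.\ $x = (2^j y - 1)/3$. Integrality forces $2^j y \equiv 1 \pmod 3$, which (given the residue of $y$ mod $3$) fixes the parity of $j$; re-parameterizing $j = 2b_0 + 1$ produces $X$, and $j = 2b_0 + 2$ produces $Y$. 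The direct computation $f(x) = 2^{2b_0+1}(2^{6b_1+4}-1)/3$ for $x \in X$ and $f(y) = 2^{2b_0+2}(2^{6b_1+2}-1)/3$ for $y \in Y$, each second factor being odd, closes out the inclusions.

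For \textbf{Level 3} I repeat the procedure with $X$ and $Y$ as the targets. The nine residue classes of the parameter pair modulo $3$ inside $X$ split into three for which the element is a multiple of $3$ (discarded) and six for which it is not; a congruence analysis modulo $27$ is now needed because the elements of $X$ and $Y$ carry a denominator of $9$, so a preimage $(2^j x - 1)/3$ has denominator $27$. Since $2$ has order $18$ modulo $27$, the new free parameter $b_2$ enters as $18b_2 + c'$ in the exponent, where $c'$ depends on which non-discarded class we are in. The twelve constants $-19, -23, -31, -47, -79, -143, -29, -37, -53, -85, -149, -277$ then emerge from the explicit expansion: each is forced by the requirement that the preimage expression collapse into the exact closed form $2^{2b_0}\bigl(2^{6b_1} A + c\,B\bigr) + (2^{2b_0}-1)/3$, with the coefficient $c$ (either $5$ or $7$) determined by the parity of $j$. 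A final parity check distributes the six preimage families of $X$ between $f^{(2b_0+2)}$ and $f^{(2b_0+3)}$, and likewise for the six preimage families of $Y$.

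The main obstacle is bookkeeping rather than any single calculation: one must tabulate $2^k \bmod 27$ for $k = 0, 1, \ldots, 17$, pair each of the six surviving residue classes with the correct parity of $j$, and read off the twelve constants without transcription error. Proposition (\ref{P.3}) provides a valuable sanity check, because the uniqueness of the special $3$-smooth representation of each odd iterate forces the twelve preimage families to be mutually disjoint and to exhaust the odd non-multiples of $3$ that map into $X \cup Y$ in the prescribed number of iterations.
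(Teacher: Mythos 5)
Your proposal is correct and follows essentially the same route the paper takes (the paper itself defers the detailed verification to Goodwin \cite{jrG2003}, but the method it describes is exactly yours): iterate backwards via $x=(2^{j}y-1)/3$, let the residue of $y$ modulo $3$ force the parity of $j$, and discard the residue classes that are multiples of $3$ since they have no odd preimages. Your Level~2 computations ($3x+1=2^{2b_0+1}(2^{6b_1+4}-1)/3$ on $X$ and $3y+1=2^{2b_0+2}(2^{6b_1+2}-1)/3$ on $Y$) check out exactly, and the Level~3 framework (nine parameter classes, three discarded, order of $2$ modulo $27$ equal to $18$) is the right bookkeeping to produce the twelve listed sets.
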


We get $216$ sets at Level $4$ and $11664$ sets at Level $5$, whereupon several patterns emerge:
\begin{enumerate}
	\item We get balanced sets in the sense that some parameters vary while others share the same geometric progressions.
	\item Excluding the set in Level $1$, half of the sets in Level $n$ take an odd number of iterations of $f$ to Level $(n-1)$, while the other half take an even number of iterations.
	\item As a specific example, we get balanced \textit{mixing} in the sense that half of the sets in Level $3$, where the number of iterations of $f$ is odd, map to $X$, while the other half map to $Y$.
	\item We partition the set in Level $1$ into three subsets, we ignore the subset comprised of multiples of $3$. Likewise, we partition $X$ and $Y$ into nine subsets each in Level $2$. Three of each of the nine subsets are comprised of multiples of $3$, so we ignore them as well. In Level $3$, we partition each set into twenty-seven subsets wherein nine subsets of each set are comprised of multiples of $3$ and so on. The pattern continues, and it should be clear how $12(27-9)=216$ sets in Level $4$.

We see that the set in Level $1$ maps to the number $1$, and that both sets in Level $2$ map to the set in Level $1$. But the pattern changes at Level $3$. Here we see that six of the sets in Level $3$ map to one of the sets in Level $2$, while the remaining six sets in Level $3$ map to the remaining set in Level $2$. The pattern to the mapping is $1,2\cdot3^{0},2\cdot3^{1},\dots,2\cdot3^{k}$. 
\end{enumerate} 

Our approach is tedious, however, there is an abstract version of it in Wirsching \cite{gW1996} which we give in the next lemma.
\begin{lemma}\label{L.2}
Let $m$ be an odd natural number for which the $3x+1$ Problem holds, where $m \not\equiv 0\pmod {3}$. The number of infinite sets per Level $n$ partitioning the inverse iterates of the number $m$, where the value $n$ counts the number of odd terms in sequences containing $m$ (or the trivial cycle in the case $m=1$), is given by 
$$
2^{n-1}3^{\bigl(n(n-3)+2\bigr)/2},
$$where $n\in \mathbb{N}$.
\end{lemma}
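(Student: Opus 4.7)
The plan is to prove the formula by induction on $n$, reducing the claim to the multiplicative recurrence
\begin{equation*}
N_n = (2 \cdot 3^{n-2})\, N_{n-1} \quad (n \geq 2), \qquad N_1 = 1,
\end{equation*}
where $N_n$ denotes the number of infinite sets at Level $n$. The base case $n = 1$ is immediate from the setup preceding Proposition (\ref{P.4}): the Level $1$ partition of the inverse iterates of the trivial cycle is the single parameterized set $\{(2^{2b_0+2}-1)/3\}$ corresponding to $k = 0$ in Proposition (\ref{P.3}), which gives $N_1 = 1$.

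For the inductive step I would formalize the refinement scheme described explicitly in Proposition (\ref{P.4}) and in item $(4)$ of the bulleted list that follows it. In passing from Level $n-1$ to Level $n$, each existing set is subdivided into $3^{n-1}$ residue subclasses of its outermost free exponent: three subclasses at Level $1 \to 2$ (the split $\{(2^{6b_0+2}-1)/3\}, \{(2^{6b_0+4}-1)/3\}, \{(2^{6b_0+6}-1)/3\}$), nine subclasses at Level $2 \to 3$, twenty-seven at Level $3 \to 4$, and so on. Within each Level $n-1$ set, exactly $3^{n-2}$ of these subclasses consist entirely of multiples of $3$, which admit no odd inverse iterate under $f$ by the result cited from Wirsching \cite{gW1996} and Amig\'{o} \cite{jA2006}, and are therefore discarded. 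Each of the remaining $3^{n-1} - 3^{n-2} = 2 \cdot 3^{n-2}$ subclasses produces exactly one new parameterized family of odd preimages at Level $n$ via the iteration formula in the proof of Proposition (\ref{P.3}). Summing across the $N_{n-1}$ existing sets yields the recurrence.

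Unwinding gives
\begin{equation*}
N_n = N_1 \prod_{k=2}^{n} 2 \cdot 3^{k-2} = 2^{n-1} \cdot 3^{0 + 1 + \cdots + (n-2)} = 2^{n-1} \cdot 3^{(n-1)(n-2)/2} = 2^{n-1} \cdot 3^{(n(n-3)+2)/2},
\end{equation*}
which is the stated formula. The main obstacle is the inductive bookkeeping: one must verify that the mod-$3^{n-1}$ refinement of the outermost exponent is the correct one at every level, that the distinguished multiple-of-$3$ subclasses number exactly $3^{n-2}$ (a fact that must be propagated through the nested factors $(2^{2b_0}-1)/3$, $(2^{6b_1}-1)/9$, \ldots appearing in Proposition (\ref{P.4})), and that two surviving subclasses never collapse to the same infinite parameterized family once reduced to least terms via Proposition (\ref{P.2}). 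The abstract framework of Wirsching \cite{gW1996} supplies these verifications, so one may either cite it directly or, as sanity-check, carry out one further explicit level of Proposition (\ref{P.4}) to confirm the uniform pattern.
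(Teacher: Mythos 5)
Your recurrence $N_n = 2\cdot 3^{n-2}N_{n-1}$ with $N_1=1$, unwound to $2^{n-1}3^{(n-1)(n-2)/2} = 2^{n-1}3^{(n(n-3)+2)/2}$, is exactly the counting mechanism the paper describes informally in item (4) of the list following Proposition (\ref{P.4}) (split each set into $3^{n-1}$ residue subclasses, discard the $3^{n-2}$ consisting of multiples of $3$), and it reproduces the paper's tallies $1, 2, 12, 216, 11664$. The paper itself gives no proof of Lemma (\ref{L.2}), deferring entirely to Wirsching \cite{gW1996} for the abstract version, so your proposal — which makes the recurrence explicit and defers the same structural verifications to the same source — is essentially the paper's approach and, if anything, slightly more complete.
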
The sets are residue classes modulo powers of $3$, and an advantage to our approach is that it lets us study the mixing by tracking just the smallest values in such sets, which we denote as follows.
  
\begin{definition}\label{D.3}
We get \textit{primitive seeds} by putting $b_{i}$ equal to zero in each set of every level as shown in Proposition (\ref{P.4}). 
\end{definition}
\begin{example}\label{X.3}
One of the $12$ primitive seeds in Level $3$ is $(2^{11}-23)/27$. So we may study the mixing property of its set by tracking just its value.
\end{example}

Cadogan \cite{cC1996} gives the next proposition, which helps us compare our approach with the one of Amig\'{o} \cite{jA2006}.
\begin{proposition}\label{P.5}
Let $(m_1,m_2,m_3,\dots)$ be a sequence of odd natural numbers such that $m_{i}=4m_{i-1}+1$, $i \geq 2$. Then $f^{(k)}(m_1)=1 \Longrightarrow                  
f^{(k+2i-2)}(m_i)=1$.
\end{proposition}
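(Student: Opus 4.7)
The plan is to establish the three-step identity
\[
f^{(3)}(4m+1) = 3m+1 = f(m)
\]
for every odd $m$, and then iterate it along the sequence $(m_1, m_2, m_3, \dots)$.

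First I would verify that every $m_i$ is odd: $m_1$ is odd by hypothesis, and if $m_{i-1}$ is odd then $m_i = 4m_{i-1}+1$ is odd as well. Because $m_i$ is odd, $f(m_i) = 3m_i + 1 = 3(4m_{i-1}+1)+1 = 4(3m_{i-1}+1)$. Since $m_{i-1}$ is odd, $3m_{i-1}+1$ is even, so $4(3m_{i-1}+1)$ remains an even natural number after each of the next two applications of $f$ (each of which simply halves). This gives $f^{(2)}(m_i) = 2(3m_{i-1}+1)$ and $f^{(3)}(m_i) = 3m_{i-1}+1 = f(m_{i-1})$. Rearranged, this is the key identity
\[
f^{(k+2)}(m_i) = f^{(k)}(m_{i-1}), \qquad k \geq 1.
\]

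The proposition then follows by a straightforward induction on $i$. The base case $i=1$ is just the hypothesis $f^{(k)}(m_1)=1$. For the inductive step, assume $f^{(k+2(i-1)-2)}(m_{i-1}) = f^{(k+2i-4)}(m_{i-1}) = 1$; applying the key identity with $k$ replaced by $k+2i-4$ yields $f^{(k+2i-2)}(m_i) = f^{(k+2i-4)}(m_{i-1}) = 1$, which is the desired conclusion.

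The entire argument rests on the short computation of $f^{(3)}(4m+1)$ in the odd case, so there is no serious obstacle. The only subtlety is being explicit that the parities work out so the two halvings following the $3x+1$ step are legitimate applications of $f$, and noticing that each additional index in the sequence costs exactly two extra iterations — which is the origin of the shift $2i-2$ in the exponent.
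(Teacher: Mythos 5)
Your proof is correct: the identity $f^{(3)}(4m+1)=3(4m+1)+1 \to 12m+4 \to 6m+2 \to 3m+1=f(m)$ for odd $m$ is exactly the right engine, and the induction on $i$ with the shift of $2$ per step is handled cleanly (note only that the key identity needs $k\ge 1$, which holds since the paper takes $k\in\mathbb{N}=\{1,2,\dots\}$). The paper itself states this proposition without proof, citing Cadogan, so there is nothing to compare against; your argument is the standard one and fills that gap.
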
 For example, the $3x+1$ Problem holds for $m=3$, thus it holds for the set $\left\{3,13,53,...\right\}$. As in Amig\'{o} \cite{jA2006}, we define \textit{seeds} as the smallest members of such sets. Likewise, within Level $3$ is the expression $2^{6b_{1}}(2^{18b_{2}+11}-23)/27 +5(2^{6b_{1}}-1)/9$, which is just a collection of seeds with $(2^{11}-23)/27$ as the primitive seed. 

Andrei, Kudlek and Niculescu \cite{sA2000} and Andrei and Masalagiu \cite{sA1998} give various sets in the form of exponential Diophantine equations for which the $3x+1$ Problem holds, but their method does not partition the inverse iterates of the number $1$ in the same way as in Proposition (\ref{P.4}).

We end this section with a few more results of later use. Goodwin \cite{jrG2003} gives the next proposition.
\begin{proposition}\label{P.6}
Let $k,b\in \mathbb{N}_{0}$. Then 
$$
2^{3^{k+1}+k+2+2b3^{k+1}} \equiv 3^{k+2}-2^{k+2}\pmod{3^{k+2}},
$$ where
\begin{equation}\label{E.3}
3^{k+2}-2^{k+2}=\sum\nolimits_{j=0}^{k}3^{k-j}2^{j+1}+3^{k+1}.
\end{equation}
\end{proposition}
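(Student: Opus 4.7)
The plan is to recognize that the congruence encodes the classical fact that $2$ has multiplicative order $2 \cdot 3^{k+1}$ in $(\mathbb{Z}/3^{k+2}\mathbb{Z})^{*}$, so that the ``halfway'' power of $2$ is congruent to $-1$ modulo $3^{k+2}$. The identity for $3^{k+2}-2^{k+2}$ will be a short algebraic warm-up, and the exponential congruence will then be reduced to a single induction.

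First I would dispose of the auxiliary identity $3^{k+2} - 2^{k+2} = \sum_{j=0}^{k} 3^{k-j} 2^{j+1} + 3^{k+1}$. Pulling out a factor of $2$ and applying the standard telescoping $(3-2)\sum_{j=0}^{k} 3^{k-j} 2^{j} = 3^{k+1} - 2^{k+1}$ yields $\sum_{j=0}^{k} 3^{k-j} 2^{j+1} = 2 \cdot 3^{k+1} - 2^{k+2}$, and adding $3^{k+1}$ produces $3 \cdot 3^{k+1} - 2^{k+2} = 3^{k+2} - 2^{k+2}$. In particular, the right-hand side of the stated congruence is $\equiv -2^{k+2} \pmod{3^{k+2}}$.

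Next I would reduce the main exponential congruence. Since $\gcd(2, 3^{k+2}) = 1$, the claim $2^{3^{k+1} + k + 2 + 2b \cdot 3^{k+1}} \equiv -2^{k+2} \pmod{3^{k+2}}$ is equivalent, after cancelling $2^{k+2}$, to
$$
2^{(2b+1)\cdot 3^{k+1}} \equiv -1 \pmod{3^{k+2}}.
$$
Because $2b+1$ is odd, it suffices to prove the single identity
$$
2^{3^{k+1}} \equiv -1 \pmod{3^{k+2}} \qquad (\star)
$$
and then raise both sides to the $(2b+1)$-st power, using $(-1)^{2b+1}=-1$.

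The crux is to establish $(\star)$ by induction on $k$. The base case $k=0$ is $2^{3}=8 \equiv -1 \pmod{9}$. For the inductive step, write $2^{3^{k+1}} = -1 + 3^{k+2} t$ for some integer $t$ and cube using the binomial theorem:
$$
2^{3^{k+2}} = \bigl(-1 + 3^{k+2} t\bigr)^{3} = -1 + 3^{k+3} t - 3^{2k+5} t^{2} + 3^{3k+6} t^{3}.
$$
Since $2k+5 \geq k+3$ and $3k+6 \geq k+3$ for every $k \geq 0$, all three correction terms vanish modulo $3^{k+3}$, so $2^{3^{k+2}} \equiv -1 \pmod{3^{k+3}}$, closing the induction. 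Combined with the earlier reductions, this delivers the proposition. The one place requiring real care is tracking the $3$-adic valuations in the binomial expansion, since an off-by-one there would destroy the lifting; everything else is bookkeeping.
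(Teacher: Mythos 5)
Your proof is correct: the telescoping identity for $3^{k+2}-2^{k+2}$, the cancellation of $2^{k+2}$ to reduce the claim to $2^{(2b+1)3^{k+1}}\equiv -1\pmod{3^{k+2}}$, and the inductive lifting of $2^{3^{k+1}}\equiv -1\pmod{3^{k+2}}$ via the cube of $-1+3^{k+2}t$ all check out, and the valuations $2k+5\ge k+3$ and $3k+6\ge k+3$ are tracked correctly. The paper itself gives no proof of this proposition --- it is quoted from an earlier work of the author --- so there is nothing to compare against; your argument is the standard one (equivalently, $2$ is a primitive root modulo $3^{k+2}$, so the element $2^{\phi(3^{k+2})/2}$ of order two must be $-1$) and is a complete, self-contained justification of the statement as used later in the paper, e.g.\ where the consequence $2^{2b\cdot 3^{k+1}}\equiv 1\pmod{3^{k+2}}$ is invoked.
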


Finally, we state a well-known theorem in Rosen \cite{khRG2000}, see also DeLeon \cite{mjD1982, mjD1984}.
\begin{theorem}\label{T.1}
Let $n$ be a positive integer with a primitive root. If $k$ is a positive integer and $a$ is an integer relatively prime to $n$, then the congruence $x^k \equiv a\pmod {n}$ has a solution if and only if $a^{\phi\left(n\right)/d} \equiv 1\pmod {n}$, where $d=gcd\bigl(k,\phi\left(n\right)\bigr)$ and $\phi\left(n\right)$ is the Euler Phi-Function. If solutions to $x^k \equiv a\pmod {n}$ exist, then there are $d$ incongruent solutions modulo $n$.
\end{theorem}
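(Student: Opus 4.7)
The plan is to reduce the multiplicative congruence $x^k \equiv a \pmod{n}$ to a linear congruence in exponents by exploiting the primitive root $g$ of $n$. First I would express $a = g^t$ for some $t$ with $0 \leq t < \phi(n)$; this is possible because $\gcd(a,n)=1$ and the powers of $g$ exhaust the reduced residue system modulo $n$. Writing $x = g^s$, the congruence becomes $g^{ks} \equiv g^t \pmod{n}$, and since $g$ has order exactly $\phi(n)$, this is equivalent to the linear congruence $ks \equiv t \pmod{\phi(n)}$ in the unknown $s$.

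Next I would invoke the standard theory of linear congruences: $ks \equiv t \pmod{\phi(n)}$ is solvable if and only if $d=\gcd(k,\phi(n))$ divides $t$, and in that case it has exactly $d$ incongruent solutions modulo $\phi(n)$. Because the map $s \mapsto g^s$ is a bijection from $\mathbb{Z}/\phi(n)\mathbb{Z}$ onto $(\mathbb{Z}/n\mathbb{Z})^{*}$, each such solution $s$ yields a distinct $x=g^s$ modulo $n$. Hence the original congruence is solvable iff $d\mid t$, with exactly $d$ incongruent solutions modulo $n$ whenever solutions exist; this already gives the claimed count.

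The remaining step is to rewrite the condition $d\mid t$ in the intrinsic form stated in the theorem. I would observe that $d\mid t$ is equivalent to $(\phi(n)/d)\cdot t \equiv 0 \pmod{\phi(n)}$, which is in turn equivalent to $g^{t\phi(n)/d} \equiv 1 \pmod{n}$; since $a = g^t$, this is exactly $a^{\phi(n)/d} \equiv 1 \pmod{n}$. Combining this with the previous paragraph yields both assertions of the theorem.

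The one delicate point, and the main place care is required, is ensuring that the passage from $d\mid t$ to $a^{\phi(n)/d}\equiv 1 \pmod{n}$ is a genuine equivalence rather than only one implication. The forward direction is immediate, but the converse relies on the fact that $g$ has order precisely $\phi(n)$ (not merely that $g^{\phi(n)}\equiv 1$ by Euler's theorem), for only then does $g^{t\phi(n)/d}\equiv 1$ force $\phi(n)\mid t\phi(n)/d$, hence $d\mid t$. Once this is noted, the proof is essentially the discrete logarithm isomorphism $(\mathbb{Z}/n\mathbb{Z})^{*}\cong \mathbb{Z}/\phi(n)\mathbb{Z}$ applied to the map $x\mapsto x^k$.
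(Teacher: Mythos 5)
Your proof is correct: it is the standard index-calculus argument, reducing $x^k \equiv a \pmod{n}$ via a primitive root $g$ to the linear congruence $ks \equiv t \pmod{\phi(n)}$ and translating the solvability condition $d \mid t$ into $a^{\phi(n)/d} \equiv 1 \pmod{n}$, and you correctly flag the one delicate point (that the converse needs $g$ to have order exactly $\phi(n)$). The paper itself gives no proof of this statement, citing it as a well-known theorem from Rosen, where precisely this argument appears; one could add only the small remark that any solution $x$ is automatically coprime to $n$ (since $x^k \equiv a$ and $\gcd(a,n)=1$), which justifies writing $x = g^s$ without losing solutions.
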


\section{The two corner cases.}\label{S.3}

The even-odd nature of our partitioning scheme prompts us to formulate two reps,
	\begin{equation}\notag
          E=\left\{\frac{2^{e_{k}}-3^{k+1}-3^{k}2^{2b_{0}+1}-\sum\nolimits_{j=1}^{k}3^{k-j}2^{2b_{0}+\sum\nolimits_{i=1}^{j}2b_{i}3^{i}+\sum\nolimits_{i=1}^{j}\upsilon_{i}+1}}{3^{k+2}}\right\},
         \end{equation}
         \begin{equation}\notag
         O=\left\{\frac{2^{o_{k}}-3^{k+1}-3^{k}2^{2b_{0}+2}-\sum\nolimits_{j=1}^{k}3^{k-j}2^{2b_{0}+\sum\nolimits_{i=1}^{j}2b_{i}3^{i}+\sum\nolimits_{i=1}^{j}\upsilon_{i}+2}}{3^{k+2}}\right\}.
	\end{equation}

In this section we study a special corner case set from each of these reps, where the sets map to a subset of themselves by either an even or odd number of iterations of $f$ between levels, respectively. It is important that the reader understands the corner cases and their proofs before continuing to Section \ref{S.4}, where we discuss the even-odd reps $E$ and $O$. The next lemma gives us the corner cases.

\begin{lemma}\label{L.3}
Let $k,b_{i}\in \mathbb{N}_{0}$. The $3x+1$ Problem holds for sets
\begin{align}
	&\left\{\frac{2^{3^{k+1}+k+2+\sum\nolimits_{i=0}^{k+1}2b_{i}3^{i}}-\sum\nolimits_{j=0}^{k}3^{k-j}2^{j+1+\sum\nolimits_{i=0}^{j}2b_{i}3^{i}}-3^{k+1}}{3^{k+2}}\right\}, \label{E.4}\\
	&\left\{ \frac{2^{2\cdot3^{k+1}-2+\sum\nolimits_{i=0}^{k+1}2b_{i}3^{i}}-\sum\nolimits_{j=0}^{k}3^{k-j}2^{3^{j+1}-1+\sum\nolimits_{i=0}^{j}2b_{i}3^{i}}-3^{k+1}}{3^{k+2}}\right\}=A.\label{E.5}
\end{align}
\end{lemma}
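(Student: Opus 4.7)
The plan is to rewrite each set as an instance of Crandall's representation (\ref{E.1}) with upper index $k+1$ in place of $k$, and then invoke Proposition (\ref{P.1}) to conclude that the $3x+1$ Problem holds. The argument splits into three subtasks: construct an explicit matching sequence of exponents $0=a_0<a_1<\cdots<a_{k+2}$, verify strict monotonicity, and establish divisibility of the numerator by $3^{k+2}$.

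For (\ref{E.4}) I would take $a_0=0$, $a_{j+1}=(j+1)+\sum_{i=0}^{j}2b_i 3^i$ for $0\le j\le k$, and $a_{k+2}=3^{k+1}+k+2+\sum_{i=0}^{k+1}2b_i 3^i$. Plugging into (\ref{E.1}), the $i=0$ term contributes $2^{a_0}3^{k+1}=3^{k+1}$, and reindexing the rest via $j=i-1$ reproduces the expression in (\ref{E.4}). The analogous matching for (\ref{E.5}) uses $a_{j+1}=3^{j+1}-1+\sum_{i=0}^{j}2b_i 3^i$ and $a_{k+2}=2\cdot 3^{k+1}-2+\sum_{i=0}^{k+1}2b_i 3^i$. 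Strict monotonicity is then immediate: in (\ref{E.4}) the gaps are $1+2b_j 3^j\ge 1$ with a terminal gap $3^{k+1}(1+2b_{k+1})+1$, and in (\ref{E.5}) they are $2\cdot 3^j(1+b_j)\ge 2$ with terminal $3^{k+1}(1+2b_{k+1})-1$, all positive.

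The crux is the divisibility congruence
\[
2^{a_{k+2}}\equiv 3^{k+1}+\sum_{j=0}^{k}3^{k-j}2^{a_{j+1}}\pmod{3^{k+2}},
\]
which I would establish by induction on $k$. For the primitive seed of (\ref{E.4}) (all $b_i=0$), Proposition (\ref{P.6}) combined with (\ref{E.3}) gives the desired congruence at once, since $2^{3^{k+1}+k+2}\equiv 3^{k+2}-2^{k+2}\equiv 3^{k+1}+\sum_{j=0}^{k}3^{k-j}2^{j+1}\pmod{3^{k+2}}$. To move from level $k-1$ to level $k$ for general $b_i$, I would multiply the level-$(k-1)$ congruence by $3$ (thereby lifting it from $\bmod\,3^{k+1}$ to $\bmod\,3^{k+2}$), subtract from the level-$k$ identity, and observe that the difference telescopes to $2^{k+2+c_k}\bigl(2^{2b_{k+1}3^{k+1}}-1\bigr)$, where $c_k=\sum_{i=0}^{k}2b_i 3^i$. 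By Theorem (\ref{T.1}), the multiplicative order of $2$ modulo $3^{k+2}$ is exactly $2\cdot 3^{k+1}$, so this difference vanishes modulo $3^{k+2}$ and the induction closes. The same scheme handles (\ref{E.5}), except the telescoping step produces the factor $2^{4\cdot 3^k}-2^{3^k+1}-3$, which must itself be shown $\equiv 0\pmod{3^{k+2}}$; this companion identity can be established by a short induction using the classical fact $2^{3^n}\equiv -1\pmod{3^{n+1}}$.

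The main obstacle is assembling this companion identity for (\ref{E.5}) and keeping the various modular orders straight in the inductive bookkeeping, since the $b_j$'s enter the exponents of $2$ with non-uniform weights $2\cdot 3^j$ and so must be absorbed through a cascade of residue rings $\mathbb{Z}/3^{j+2}\mathbb{Z}$. Once the congruence is in place, positivity of each numerator is automatic (the gap $a_{k+2}-a_{k+1}$ is of order $3^{k+1}$), and Proposition (\ref{P.1}) then certifies that every element of (\ref{E.4}) and (\ref{E.5}) iterates to $1$ under $f$.
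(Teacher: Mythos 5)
Your route is genuinely different from the paper's. For (\ref{E.5}) the paper argues by descent: it computes $f^{(2b_{0}+3)}A$ explicitly, shows the image lies in the copy $A^{*}$ of $A$ with $k$ replaced by $k-1$, and iterates down to a Level-$3$ set covered by Proposition (\ref{P.4}); divisibility by $3^{k+2}$ is treated separately via Theorem (\ref{T.1}) applied to the $3$-smooth part, and (\ref{E.4}) is simply deferred to \cite{jrG2003}. You instead verify the Crandall/B\"ohm--Sontacchi criterion head-on: exhibit the exponent sequence, check strict monotonicity, and prove the divisibility congruence by induction on $k$. Your exponent matching and gap computations are correct, and your companion identity for (\ref{E.5}), namely $2^{4\cdot3^{k}}-2^{3^{k}+1}-3\equiv0\pmod{3^{k+2}}$, is exactly what the telescoping produces after dividing the raw difference by $2^{2\cdot3^{k}}$ and invoking $2^{2\cdot3^{k+1}}\equiv1\pmod{3^{k+2}}$; it does follow from $2^{3^{n}}\equiv-1\pmod{3^{n+1}}$. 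This is arguably sharper than the paper's treatment, which never states the divisibility claim for general $b_{i}$ precisely.

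The gap is in your inductive step for (\ref{E.4}). Subtracting $3$ times the level-$(k-1)$ numerator from the level-$k$ numerator does not leave $2^{k+2+c_{k}}\bigl(2^{2b_{k+1}3^{k+1}}-1\bigr)$; it leaves $2^{k+1+c_{k}}\bigl(2^{3^{k+1}(1+2b_{k+1})+1}-3\cdot2^{3^{k}}-1\bigr)$, because the leading exponent jumps by $3^{k+1}(1+2b_{k+1})+1$ while a new lowest term $2^{k+1+c_{k}}$ enters the sum. A numerical check: at $k=1$ with all $b_{i}=0$ the level-$1$ numerator is $4077$, three times the level-$0$ numerator is $81$, and $4077-81=3996\neq0$, so the difference cannot be the trivially vanishing quantity you claim. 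Consequently the step does not close using the order of $2$ alone; you need a second companion congruence, $2^{3^{k+1}+1}\equiv3\cdot2^{3^{k}}+1\pmod{3^{k+2}}$, which is true and is proved exactly like your (\ref{E.5}) identity (write $2^{3^{k}}=-1+t\,3^{k+1}$ and cube). With that congruence inserted, and with the $k=0$ base case verified for general $b_{0},b_{1}$ (a short computation modulo $9$), your argument goes through for both sets; note also that you only ever need $2^{2\cdot3^{k+1}}\equiv1\pmod{3^{k+2}}$, which is Euler's theorem or Proposition (\ref{P.6}), not the exact order of $2$, which Theorem (\ref{T.1}) does not actually supply.
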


\begin{proof} 
See Goodwin \cite{jrG2003} for a proof of (\ref{E.4}). Next we show that (\ref{E.5}) gives positive-odd integers. Fix $b_i=0$ and let the $3$-smooth (special of level $k$) part of $A$ equal $a$, so $a=\sum\nolimits_{j=0}^{k}3^{k-j}2^{3^{j+1}-1}+3^{k+1}$. Every power of $3$ has a primitive root and $\phi\left(3^{k+2}\right)=2\cdot3^{k+1}$. Thus by applying Theorem (\ref{T.1}), we must show that there exists a value of $c$ such that $a^{2\cdot3^{k+1}/d} \equiv 1\pmod{3^{k+2}}$, where $d=gcd\bigl(c+\sum\nolimits_{i=1}^{k}\upsilon_{i}+2,2\cdot3^{k+1}\bigr)$. We have $c=3^{k+1}-1$ and maximizing $\upsilon_{i}$, as defined in Section \ref{S.4}, we have $\sum\nolimits_{i=1}^{k}2\cdot3^{i}=3^{k+1}-3 \Longrightarrow c+\sum\nolimits_{i=1}^{k}\upsilon_{i}+2=2\cdot3^{k+1}-2$, and $d=2=\left(2\cdot3^{k+1}-2,2\cdot3^{k+1}\right) \Longrightarrow b=3^{k+1}$, where $b=2\cdot3^{k+1}/d$. Thus the congruence $a^b \equiv 1\pmod{3^{k+2}}$ holds as $gcd\left(a,3^{k+2}\right)=1$. Note that Proposition (\ref{P.6}) shows that $2^{2b_{k+1}3^{k+1}} \equiv 1\pmod{3^{k+2}}$. By using the expansion of the representation, see Proposition (\ref{P.4}) for an example, we conclude that the congruence gives positive-odd integers. Finally, we can apply Proposition (\ref{P.1}) as $A$ is of the form which iterates to $1$ under $f$; however, we give a proof by induction as in Goodwin \cite{jrG2003} to show how (\ref{E.5}) in Level $n$ maps to a subset of itself in Level $(n-1)$ all the way down to Level $3$, and to show why we call this set a corner case. Base case: Let $b_{0}=0$. Then

\begin{align} \notag
	 & f^{(3)}\left\{ \frac{2^{2\cdot3^{k+1}+4+\sum\nolimits_{i=1}^{k+1}2b_{i}3^{i}}-\sum\nolimits_{j=1}^{k}3^{k-j}2^{3^{j+1}-1+\sum\nolimits_{i=1}^{j}2b_{i}3^{i}}-2^23^{k}-3^{k+1}}{3^{k+2}}\right\}=\\ \notag
	&=f^{(2)}\left\{ \frac{2^{2\cdot3^{k+1}+4+\sum\nolimits_{i=1}^{k+1}2b_{i}3^{i}}-\sum\nolimits_{j=1}^{k}3^{k-j}2^{3^{j+1}-1+\sum\nolimits_{i=1}^{j}2b_{i}3^{i}}-2^23^{k}}{3^{k+1}}\right\}=\\ \notag
	&=\left\{ \frac{2^{2\cdot3^{k+1}+2+\sum\nolimits_{i=1}^{k+1}2b_{i}3^{i}}-\sum\nolimits_{j=1}^{k}3^{k-j}2^{3^{j+1}-3+\sum\nolimits_{i=1}^{j}2b_{i}3^{i}}-3^{k}}{3^{k+1}}\right\}=B.
\end{align}

\begingroup
\allowdisplaybreaks
\noindent Induction on $b_{0}$ gives
\begin{align} \notag
	& f^{(2b_{0}+3)}\left\{ \frac{2^{2\cdot3^{k+1}-2+\sum\nolimits_{i=0}^{k+1}2b_{i}3^{i}}-\sum\nolimits_{j=0}^{k}3^{k-j}2^{3^{j+1}-1+\sum\nolimits_{i=0}^{j}2b_{i}3^{i}}-3^{k+1}}{3^{k+2}}\right\}=\\ \notag
	&=f^{(2b_{0}+2)}\left\{ \frac{2^{2\cdot3^{k+1}-2+\sum\nolimits_{i=0}^{k+1}2b_{i}3^{i}}-\sum\nolimits_{j=0}^{k}3^{k-j}2^{3^{j+1}-1+\sum\nolimits_{i=0}^{j}2b_{i}3^{i}}}{3^{k+1}}\right\}=\\ \notag
	&=f^{(2)}\left\{ \frac{2^{2\cdot3^{k+1}+4+\sum\nolimits_{i=1}^{k+1}2b_{i}3^{i}}-\sum\nolimits_{j=1}^{k}3^{k-j}2^{3^{j+1}-1+\sum\nolimits_{i=1}^{j}2b_{i}3^{i}}-2^23^{k}}{3^{k+1}}\right\}=\\ \notag
	&=\left\{ \frac{2^{2\cdot3^{k+1}+2+\sum\nolimits_{i=1}^{k+1}2b_{i}3^{i}}-\sum\nolimits_{j=1}^{k}3^{k-j}2^{3^{j+1}-3+\sum\nolimits_{i=1}^{j}2b_{i}3^{i}}-3^{k}}{3^{k+1}}\right\}=B.
\end{align}
\endgroup
\noindent Define $A^*$ as $A$, but without the leftmost term, that is, subtract $1$ from $k$ to get 
$$
\left\{ \frac{2^{2\cdot3^{k}-2+\sum\nolimits_{i=0}^{k}2a_{i}3^{i}}-\sum\nolimits_{j=0}^{k-1}3^{k-1-j}2^{3^{j+1}-1+\sum\nolimits_{i=0}^{j}2a_{i}3^{i}}-3^{k}}{3^{k+1}}\right\}=A^*.
$$
\noindent We show $B \subset A^*$, in the following way
\begin{multline} \notag
	\frac{2^{2\cdot3^{k+1}+2+\sum\nolimits_{i=1}^{k+1}2b_{i}3^{i}}-\sum\nolimits_{j=1}^{k}3^{k-j}2^{3^{j+1}-3+\sum\nolimits_{i=1}^{j}2b_{i}3^{i}}-3^{k}}{3^{k+1}}=\\
	=\frac{2^{2\cdot3^{k}-2+\sum\nolimits_{i=0}^{k}2a_{i}3^{i}}-\sum\nolimits_{j=0}^{k-1}3^{k-1-j}2^{3^{j+1}-1+\sum\nolimits_{i=0}^{j}2a_{i}3^{i}}-3^{k}}{3^{k+1}}\\
	\Longleftrightarrow 2^{2\cdot3^{k}(3b_{k+1}+3)+2+\sum\nolimits_{i=1}^{k}2b_{i}3^{i}}-\sum\nolimits_{j=1}^{k}3^{k-j}2^{3^{j+1}-3+\sum\nolimits_{i=1}^{j}2b_{i}3^{i}}=\\
	=2^{2\cdot3^{k}(a_{k}+1)-2+\sum\nolimits_{i=0}^{k-1}2a_{i}3^{i}}-\sum\nolimits_{j=0}^{k-1}3^{k-1-j}2^{3^{j+1}-1+\sum\nolimits_{i=0}^{j}2a_{i}3^{i}},
\end{multline}
where $a_{i}=3b_{i+1}+2$. So it follows with $A^{**}$ defined as $A$, but with $k-2$, $f^{(2b_{0}+3)}A \subset A^*,f^{(2b_{0}+3)}A^* \subset A^{**}, \cdots$ all the way down to the set
$$
\Biggl\{ 2^{2b_{0}}\Biggl( 2^{6b_{1}}\biggl( \frac{2^{18b_{2}+16}-277}{27}\biggr) +7\biggl( \frac{2^{6b_{1}}-1}{9}\biggr)\Biggr) +\frac{2^{2b_{0}}-1}{3}\Biggr\}.
$$
\noindent Proposition (\ref{P.4}) completes the proof.
\end{proof}

\section{The even-odd representations.}\label{S.4}

\begin{theorem}\label{T.2}
	The set $E \cup O$ gives just the odd natural number solutions to rep (\ref{E.1}), as follows. Define a family of sets of consecutive natural numbers
	$$
	\left\{\bigl\{ 1,2,\dots,6\bigr\} ,\bigl\{ 1,\dots,18\bigr\} ,\dots,\bigl\{1,\dots,2\cdot3^{k^{\ast}}-1,2\cdot3^{k^{\ast }}\bigr\} \right\} _{k^{\ast}\in\left\{1,2,\dots,k-1,k\right\}},
	$$
	with variables $\upsilon_{1}\in \bigl\{ 1,2,\dots,6\bigr\} ,\dots,\upsilon_{j}\in \left\{ 1,\dots,2\cdot3^{k^{\ast }}-1,2\cdot3^{k^{\ast }}\right\}$. Denote
         $$
         e_{k}=c+2b_{0}+\sum\nolimits_{i=1}^{k+1}2b_{i}3^{i}+\sum\nolimits_{i=1}^{k}\upsilon_{i}+1
         $$ and
         $$
         o_{k}=c+2b_{0}+\sum\nolimits_{i=1}^{k+1}2b_{i}3^{i}+\sum\nolimits_{i=1}^{k}\upsilon_{i}+2,
         $$ and define
	\begin{equation}\label{E.6}
          E=\left\{\frac{2^{e_{k}}-3^{k+1}-3^{k}2^{2b_{0}+1}-\sum\nolimits_{j=1}^{k}3^{k-j}2^{2b_{0}+\sum\nolimits_{i=1}^{j}2b_{i}3^{i}+\sum\nolimits_{i=1}^{j}\upsilon_{i}+1}}{3^{k+2}}\right\},
         \end{equation}
         \begin{equation}\label{E.7}
         O=\left\{\frac{2^{o_{k}}-3^{k+1}-3^{k}2^{2b_{0}+2}-\sum\nolimits_{j=1}^{k}3^{k-j}2^{2b_{0}+\sum\nolimits_{i=1}^{j}2b_{i}3^{i}+\sum\nolimits_{i=1}^{j}\upsilon_{i}+2}}{3^{k+2}}\right\},
	\end{equation}
	where $c$ is the value of the form $c \equiv 2,4\pmod {6}$, with $2 \leq c< 2\cdot3^{k+1}$, per choice of $\upsilon_{i}$ giving the smallest positive integer solution, $k\in \mathbb{N}$ and $b_{i},b_{0}\in \mathbb{N}_{0}$.
\end{theorem}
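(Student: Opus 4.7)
The plan is to establish two directions: first, that every $m \in E \cup O$ is an odd natural number which iterates to $1$ under $f$, and second, that every odd natural number $m$ iterating to $1$ appears in $E \cup O$. Throughout I will follow the template of the proof of Lemma (\ref{L.3}) for the two corner cases, extending its reasoning to the full range of admissible parameters $\upsilon_i$.

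For the first direction I match the general member of $E$ against rep (\ref{E.1}) by reading off exponents $0 = a_0 < a_1 < \cdots < a_{k+1}$. The term $3^{k+1}$ encodes $a_0 = 0$, the term $3^k 2^{2b_0+1}$ encodes $a_1 = 2b_0 + 1$, and the $j$-th summand, for $1 \leq j \leq k$, encodes $a_{j+1} = 2b_0 + \sum_{i=1}^{j} 2 b_i 3^i + \sum_{i=1}^{j} \upsilon_i + 1$. Each $\upsilon_i \geq 1$ forces strict monotonicity of the exponents, placing the element of $E$ into the family (\ref{E.1}) with $k$ there replaced by $k+1$; the case of $O$ is identical after swapping $+1$ for $+2$. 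It then remains to verify integrality, which amounts to the congruence $2^{e_k} \equiv N \pmod{3^{k+2}}$, where $N$ denotes the $3$-smooth-plus-$3^{k+1}$ portion of the numerator. Since $2$ is a primitive root modulo every power of $3 \geq 3$ and $\gcd(N, 3^{k+2}) = 1$, Theorem (\ref{T.1}) reduces solvability to verifying $N^{\phi(3^{k+2})/d} \equiv 1 \pmod{3^{k+2}}$ for the appropriate $d$ dividing $\phi(3^{k+2}) = 2 \cdot 3^{k+1}$. The stipulation $c \equiv 2, 4 \pmod 6$ with $2 \leq c < 2 \cdot 3^{k+1}$ selects, for each choice of the $\upsilon_i$, the unique smallest exponent producing a solution; Proposition (\ref{P.6}) then furnishes the periodicity $2^{2 b_{k+1} 3^{k+1}} \equiv 1 \pmod{3^{k+2}}$ that allows $b_{k+1}$ to range freely over $\mathbb{N}_0$, and an inductive application at lower moduli frees $b_k, \ldots, b_0$ as well.

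For the second direction I appeal to Lemma (\ref{L.2}): at Level $n$ the odd-inverse iterates of $1$ partition into exactly $2^{n-1} 3^{(n(n-3)+2)/2}$ pairwise disjoint infinite sets. Setting $n = k+2$ (so that $n$ counts the odd terms, including $m$ and the trivial $1$, in sequences captured by (\ref{E.6})–(\ref{E.7})), this count equals $2^{k+1} 3^{k(k+1)/2}$. On the other hand, the parameters of $E \cup O$ yield a factor of $2$ for the choice between $E$ and $O$, together with $\prod_{i=1}^{k}(2 \cdot 3^i) = 2^k \, 3^{k(k+1)/2}$ independent choices of $(\upsilon_1, \ldots, \upsilon_k)$, producing $2^{k+1} \, 3^{k(k+1)/2}$ families in total, each determining a distinct residue class modulo a power of $3$ by the uniqueness of $c$. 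Because the counts agree and the residue classes are disjoint, $E \cup O$ exhausts every odd inverse iterate of $1$ at every level.

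The principal obstacle I anticipate is the second direction: aligning the branching structure of Proposition (\ref{P.4}) with the uniform closed-form of (\ref{E.6})–(\ref{E.7}). Showing that the recursive partition into $X$- and $Y$-type subsets (which governs whether an even or odd number of iterations of $f$ is used between consecutive levels) is encoded precisely by the $\upsilon_i$ choices, and that the split between $e_k$ and $o_k$ produces the $+1$ versus $+2$ distinction needed at each level, is the delicate part. One must also verify that the uniqueness clause on $c$ really does furnish exactly one representative per residue class rather than over- or under-counting. Once that correspondence is in place, the remainder is bookkeeping along the lines of Proposition (\ref{P.3}) and the corner-case induction already carried out in Lemma (\ref{L.3}).
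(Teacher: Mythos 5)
Your proposal follows essentially the same route as the paper's own proof: the first direction uses the identical primitive-root argument via Theorem (\ref{T.1}) together with the periodicity from Proposition (\ref{P.6}) and a monotonicity check on the exponents to place members of $E \cup O$ into rep (\ref{E.1}), and the second direction is the same counting argument matching the $2^{k+1}3^{k(k+1)/2}$ parameter choices against Lemma (\ref{L.2}). You even flag the same delicate point (the correspondence between the $\upsilon_i$ branching and the recursive partition, and the per-residue-class uniqueness of $c$) that the paper itself leaves largely informal, deferring to the Appendix; the only small omission is that you do not note, as the paper does, that Levels $1$ and $2$ must be covered separately by Proposition (\ref{P.4}) with the offset reconciled via Proposition (\ref{P.2}).
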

\begin{proof} There are two parts to the proof. First we show that $E \cup O$ gives positive-odd integers, and that these integers iterate to $1$ under $f$. Let the $3$-smooth (special of level $k$) part of $E$ equal $a$ and let $b_0=b_i=0$, so $a=3^{k+1}+3^{k}2^{1}+\sum\nolimits_{j=1}^{k}3^{k-j}2^{\sum\nolimits_{i=1}^{j}\upsilon_{i}+1}$. Every power of $3$ has a primitive root and $\phi\left(3^{k+2}\right)=2\cdot3^{k+1}$. Thus by applying Theorem (\ref{T.1}), we must show that there exists a value of $c$ such that $a^{2\cdot3^{k+1}/d} \equiv 1\pmod{3^{k+2}}$, where $d=gcd\bigl(c+\sum\nolimits_{i=1}^{k}\upsilon_{i}+1,2\cdot3^{k+1}\bigr)$. As $a$ is invertible modulo $3^{k+2}$, there exists a unique value of $c$ for $\upsilon_{i}$ fixed in the given domain giving an integer value $b=2\cdot3^{k+1}/d$. Thus the congruence $a^b \equiv 1\pmod{3^{k+2}}$ holds as $gcd\left(a,3^{k+2}\right)=1$. Note that Proposition (\ref{P.6}) shows that $2^{2b_{k+1}3^{k+1}} \equiv 1\pmod{3^{k+2}}$. By using the expansion of the representation, see Proposition (\ref{P.4}) for an example, we conclude that the congruence gives positive-odd integers. Next we apply Proposition (\ref{P.1}) as the rep is of the form which iterates to $1$ under $f$, which we check as follows: 
\begin{multline}\notag
0<1+2b_{0}3^0<\upsilon_{1}+1+2b_{0}3^0+2b_{1}3^1<\cdots<\\[4pt] \notag
<\upsilon_{1}+\cdots+\upsilon_{k-1}+1+2b_{0}3^0+2b_{1}3^1+\cdots+2b_{k}3^{k}<\\[4pt]
<c+\upsilon_{1}+\cdots+\upsilon_{k}+1+2b_{0}3^0+2b_{1}3^1+\cdots+2b_{k+1}3^{k+1}.
\end{multline} The argument for $O$ is analogous, which completes the first part of the proof.

To complete the proof we must show that $E \cup O$ gives just the odd natural number solutions to rep (\ref{E.1}). Regarding the exponents of $2$ in $E$, we check that Proposition (\ref{P.3}) holds:
\begin{align}\notag
&\left(\upsilon_{1}+\cdots+\upsilon_{k}+1+2b_{0}3^0+2b_{1}3^1+\cdots+2b_{k}3^k+2b_{k+1}3^{k+1}+c\right)-\\[4pt] \notag
&-\left(\upsilon_{1}+\cdots+\upsilon_{k}+1+2b_{0}3^0+2b_{1}3^1+\cdots+2b_{k}3^k\right)=2b_{k+1}3^{k+1}+c\\[4pt] \notag
&\left(\upsilon_{1}+\cdots+\upsilon_{k}+1+2b_{0}3^0+2b_{1}3^1+\cdots+2b_{k}3^k\right)-\\[4pt] \notag
&-\left(\upsilon_{1}+\cdots+\upsilon_{k-1}+1+2b_{0}3^0+2b_{1}3^1+\cdots+2b_{k-1}3^{k-1}\right)=\upsilon _{k}+2b_{k}3^k\\ \notag
&\vdots\\ \notag
&\left(\upsilon_{1}+1+2b_{0}3^0+2b_{1}3^1\right)-\left(1+2b_{0}3^0\right)=\upsilon_{1}+2b_{1}3^1\\[4pt] \notag
&\left(1+2b_{0}3^0\right)-0=1+2b_{0}3^0. \notag
\end{align}
The differences sum to
\begin{equation}\label{E.8}
c+2b_{0}+\sum\nolimits_{i=1}^{k+1}2b_{i}3^{i}+\sum\nolimits_{i=1}^{k}\upsilon_{i}+1=e_{k}.
\end{equation} The argument for $O$ is analogous. We see for $n>2$ that the number of combinations of $\upsilon _{i}$ in $E \cup O$ gives the number of sets per Level $n$ as 
\begin{equation}\label{E.9}
2^{n-1}3^{\bigl(n(n-3)+2\bigr)/2}, 
\end{equation} where the formulation of $E \cup O$ begins at Level $3$. This offset (noted in Amig\'{o} \cite{jA2006} too) is unavoidable and it is okay because, by Proposition (\ref{P.2}), we have that Level $1$ is contained in Level $2$, Level $2$ is contained in Level $3$ and so on. The sets in Levels $1$ and $2$ from Proposition (\ref{P.4}) cover the cases $n=1,2$. Thus we get exactly the same number of sets per Level $n$ partitioning the odd-inverse iterates of the number $1$ as given by Lemma (\ref{L.2}), see Appendix \ref{A} for the details. Finally, it should be clear that the values $b_{i}$ and $b_{0}$ enumerate just the odd natural number solutions to rep (\ref{E.1}). 
\end{proof}

We can see how the sets in Level $(n+1)$ contain the sets in Level $n$, per Proposition \ref{P.2}, by looking at an example.
\begin{example}\label{X.4}Consider the following set from Level $2$
$$
\biggl\{2^{2b_{0}}\Bigl(2^{6b_{1}+4}-7\Bigr)/9+\Bigl(2^{2b_{0}}-1\Bigr)/3\biggr\}.
$$
Fixing $b_{1}=0$ gives
$$
\Bigl(16\cdot2^{2b_{0}}-7\cdot2^{2b_{0}} \Bigr)/9+\Bigl(2^{2b_{0}}-1\Bigr)/3=\Bigl(2^{2b_{0}+2}-1\Bigr)/3,
$$
which is exactly the set in Level $1$. Working backwards we consider the following set from Level $3$
$$
\biggl\{ 2^{2b_{0}}\Bigl( 2^{6b_{1}}\left(2^{18b_{2}+6}-37\right)/27+7\left(2^{6b_{1}}-1\right)/9\Bigr)+\left(2^{2b_{0}}-1\right)/3\biggr\}.
$$
Fixing $b_{2}=0$ gives the aforementioned set from Level $2$. We leave the general case to the reader. Therefore, from this fact and by Lemma (\ref{L.2}), the number of sets in Level $(n+1)$ which do not contain duplicate elements from Level $n$ is given by
\begin{equation}\label{E.10}
\left(2\cdot3^{n-1}-1\right)2^{n-1}3^{\bigl(n(n-3)+2\bigr)/2}.
\end{equation} However, the sets which do contain duplicate elements from Level $n$ also contain a countably infinite number of new elements. Moreover, every value of $c$, in the given domain for each level, gives a solution an equal number of times. For example, Table \ref{tab1} shows how each value of $c$ appears twice in Level $3$. Each value of $c$ appears $12$ times in Level $4$, and the formula for Level $n$ is
	\begin{equation}\label{E.11}
		\frac{2^{n-1}3^{\bigl(n(n-3)+2\bigr)/2}}{2\cdot3^{n-2}}=2^{n-2}3^{\bigl(n(n-5)+6\bigr)/2},
	\end{equation}
where $n \geq 2$. The proof follows directly from Lemma (\ref{L.2}) and the fact that there are $2\cdot3^{k}$ residue classes $m$ modulo $3^{k+1}$ per level with $m \not\equiv 0\pmod {3}$ in the \textquotedblleft pruned $3x+1$ Tree", see Lagarias \cite[p. 160]{jcL2010}. 
\end{example} 

Here is how the formulas work: fix $k$, choose values for the $\upsilon_{i}$ in either $E$ or $O$, set $b_{i}=b_{0}=0$ and calculate the value $c$ from the given domain, then variables $b_{i}$ and $b_{0}$ give an infinite set of odd natural numbers for which the $3x+1$ Problem holds, where the number of odd terms (not counting the number $1$) in the respective sequences is equal to the value $k+2$, minding the caveat $c=2$. Further, we have just shown that $c$ is the number of divide by $2$ steps between the first occurrence of $1$ and the prior odd term in $S$ for values of $m$ which are primitive seeds and that, together, the values $e_{k}$ and $o_{k}$ give the exact sum $r$ as required by Lemma (\ref{L.1}), again minding the caveat $c=2$.

\begin{definition}\label{D.4}Denote by $M_{k}$ the set given by $E \cup O$ joined with the sets in Levels $1$ and $2$ along with the elements given by multiplying each element of the aforementioned set by all powers of $2$, $\{2^1, 2^2, \dots\}$, where $m_{k} \in M_{k}$ and $k$ fixed denotes a set whose elements have $k$ number of odd terms in their respective sequences, that is, we omit the duplicate elements per level given by Proposition (\ref{P.2}).
\end{definition}
By corner case (\ref{E.4}) and Proposition (\ref{P.3}), or directly from Theorem (\ref{T.2}), we see that each $M_{k}$ is infinite. There are various proofs of this result in Crandall \cite{rC1978}, Wirsching \cite{gW1996}, Andrei, Kudlek, and Niculescu \cite{sA2000} and Shallit and Wilson \cite{jSdW1991}, and we state it formally as follows. 
\begin{corollary}\label{O.1}
For every fixed $k\in \mathbb{N}$ there exists a countably infinite number of $m_{k} \in M_{k}$ for which the $3x+1$ Problem holds, where $k$ counts the number of odd terms in the respective sequences.
\end{corollary}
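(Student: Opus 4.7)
The plan is to deduce the corollary directly from the machinery already built, principally Lemma (\ref{L.3}) and Theorem (\ref{T.2}), by exhibiting an explicit countably infinite family inside $M_{k}$ for each fixed $k$. First I would translate the index $k$ of the corollary (number of odd terms in $S$, not counting the terminal $1$) to the parameter used in the even-odd reps: by Proposition (\ref{P.3}) and the indexing convention set up just before Definition (\ref{D.4}), fixing the level-parameter of $E$ and $O$ at $k-2$ (or handling the small cases $k=1,2$ via the explicit sets from Proposition (\ref{P.4})) yields elements of $M_{k}$.

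Next I would invoke corner case (\ref{E.4}) of Lemma (\ref{L.3}), which gives an explicit one-parameter family
$$
m(b_{0},b_{1},\dots,b_{k+1})\;=\;\frac{2^{3^{k+1}+k+2+\sum_{i=0}^{k+1}2b_{i}3^{i}}-\sum_{j=0}^{k}3^{k-j}2^{j+1+\sum_{i=0}^{j}2b_{i}3^{i}}-3^{k+1}}{3^{k+2}},
$$
of primitive-seed type known to iterate to $1$. Fixing all $b_{i}=0$ for $i\geq 1$ and letting $b_{0}$ range over $\mathbb{N}_{0}$, the exponent $3^{k+1}+k+2+2b_{0}$ strictly increases with $b_{0}$ while all other exponents grow no faster (the $2b_{0}\cdot 3^{0}=2b_{0}$ contribution to each summand is dominated by the leading $2^{3^{k+1}+k+2+2b_{0}}$ term), so successive values of $b_{0}$ yield strictly increasing, hence pairwise distinct, odd natural numbers. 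This already produces countably infinitely many members of $M_{k}$.

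To be safe, I would cross-check against Theorem (\ref{T.2}): for any admissible choice of $\upsilon_{1},\dots,\upsilon_{k}$ and the corresponding uniquely determined $c\in\{2,4\}\pmod 6$ with $2\leq c<2\cdot 3^{k+1}$, formulas (\ref{E.6})--(\ref{E.7}) produce an infinite subfamily of $E\cup O$ parameterized by $(b_{0},b_{i})\in\mathbb{N}_{0}^{k+2}$, each element of which lies in $M_{k}$ (with $k$ interpreted correctly per the level offset discussed after (\ref{E.9})). The same monotonicity argument in $b_{0}$ shows distinctness; alternatively, Proposition (\ref{P.3}) guarantees that different tuples $(b_{0},\dots,b_{k+1})$ give different sequences $S$, hence different $m$. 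Finally, Definition (\ref{D.4}) explicitly adjoins $\{2^{j}m : j\geq 1\}$ to these odd seeds, and each such multiplication preserves the number of odd terms in the eventual sequence, reinforcing that $M_{k}$ is infinite.

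The only real obstacle is bookkeeping: making sure the ``$k$'' of the corollary matches the ``$k$'' in (\ref{E.4}) and in Theorem (\ref{T.2}) (whose formulation begins at Level $3$), and separately handling $k=1,2$ from Proposition (\ref{P.4}). The mathematical content—infinitely many distinct primitive seeds per level—is immediate from the monotone growth of the dominant exponent $2^{e_{k}}$ (or $2^{o_{k}}$) in $b_{0}$, with all remaining factors bounded in the denominator by the fixed power $3^{k+2}$.
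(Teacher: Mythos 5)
Your proposal is correct and follows essentially the same route as the paper, which justifies the corollary in one line by appealing to corner case (\ref{E.4}) together with Proposition (\ref{P.3}) (or directly to Theorem (\ref{T.2})); you simply fill in the distinctness detail via the monotone growth of the dominant exponent in $b_{0}$ and the level/offset bookkeeping, which the paper leaves implicit.
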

Further, the set of pairwise distinct natural number solutions to rep (\ref{E.1}) is equivalent to $M_{k}$. Moreover, the sets $M_{k}$, where $k \geq 1$, are $2$\textit{-automatic} as defined and proved in Shallit and Wilson \cite{jSdW1991}.

\begin{remark}\label{R.2}Formula (\ref{E.9}) gives the number of different antisymmetric binary relations on a set of $n$ labeled points, see Pfeiffer \cite{gP2004} and Sloane \cite[A083667]{njaS2011}.
\end{remark}

\subsection{Some examples}\label{SUB.4}
Let $k\in \mathbb{N}_{0}$.
\begin{example}\label{X.5}
Consider $E$ and put $\upsilon_{1}=\cdots =\upsilon_{j}=1$ to get (\ref{E.4}).
\end{example}
\begin{example}\label{X.6}
Consider $O$ and put $\upsilon_{1}=6,\upsilon_{2}=18,\dots,\upsilon_{j}=2\cdot3^{k^{\ast }}$ to get (\ref{E.5}).
\end{example}
\begin{example}\label{X.7}
Consider $O$ and put $\upsilon_{1}=\cdots =\upsilon_{j}=1$ to get the following result conjectured in \cite{jrG2003}
\begin{equation}\label{E.12}
\left\{ \frac{2^{z_{k}+\sum\nolimits_{i=0}^{k+1}2b_{i}3^{i}}-\sum\nolimits_{j=0}^{k}3^{k-j}2^{j+2+\sum\nolimits_{i=0}^{j}2b_{i}3^{i}}-3^{k+1}}{3^{k+2}}\right\},
\end{equation}
\begin{equation}\label{E.13}
	\textit{where } z_{k}=
	\begin{cases}
		15\cdot3^{k-1}+k+3, &\textit{$k$ odd;}\\ 
		3^{k}+k+3, &\textit{$k$ even.}
	\end{cases}
\end{equation}
Notice we had to calculate enough initial values of the sequence
$$
(4,19,14,141,88,1223,738,10945,\dots),
$$
thus $9^{k}+2k+3=3^{2k}+2k+3,$ so that $3^{k}+k+3$ gives $z_{2i}=$$(4,14,88,\dots)$. Likewise, $15\cdot9^{k}+2k+4=15\cdot3^{2k}+2k+4$, so that $15\cdot3^{k-1}+k+3$ gives $z_{2i+1}=(19,141,1223,\dots)$. Calculations get large early in a numerical study, and it should be clear that we have shown the $3x+1$ Problem holds for the sets in these examples.
\end{example}

To demystify the formulation of $E \cup O$, we end this section with a conjecture equivalent to the $3x+1$ Problem.
\begin{conjecture}\label{C.2}
	Let $\omega,\beta \in \mathbb{N}_{0}$. Then the sets $E$ and $O$ partition all of the odds in $\mathbb{N}$ as follows
	\begin{align}
		&m\in \bigl\{2^{2\omega}\left(4\beta+3\right) + \left(2^{2\omega}-1\right)/3\bigr\} \Longleftrightarrow m\in E, \label{E.14}\\
		&m\in \bigl\{2^{2\omega}\left(8\beta+1\right) +  \left(2^{2\omega}-1\right)/3\bigr\} \Longleftrightarrow m\in O.\label{E.15}\\
		\notag
	\end{align}
	Note that $1\pmod 8 \cup 3\pmod 8 \cup 5\pmod 8 \cup 7\pmod 8$ are all odds in $\mathbb{N}$, $3\pmod 4 = 3\pmod 8 \cup 7\pmod 8$ and the formulation of $E$ and $O$ splits $5\pmod 8$ equally viz. $\{5,21,37,\dots\} \subset O$ and $\{13,29,45,\dots\} \subset E$.
\end{conjecture}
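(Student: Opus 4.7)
The plan is to separate Conjecture (\ref{C.2}) into three ingredients of increasing difficulty. Writing $\mathcal{E}=\bigl\{2^{2\omega}(4\beta+3)+(2^{2\omega}-1)/3:\omega,\beta\in\mathbb{N}_{0}\bigr\}$ and $\mathcal{O}=\bigl\{2^{2\omega}(8\beta+1)+(2^{2\omega}-1)/3:\omega,\beta\in\mathbb{N}_{0}\bigr\}$, these ingredients are: (a) the families $\mathcal{E}$ and $\mathcal{O}$ partition the odd naturals; (b) $E\subseteq\mathcal{E}$ and $O\subseteq\mathcal{O}$; (c) $\mathcal{E}\subseteq E$ and $\mathcal{O}\subseteq O$. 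I expect (a) and (b) to be provable from the material already developed in the paper, while (c) will turn out to be equivalent to the $3x+1$ Problem itself.

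For ingredient (a), the key observation I would establish first is that $m\in\mathcal{E}$ if and only if $v_{2}(3m+1)$ is odd, and $m\in\mathcal{O}$ if and only if $v_{2}(3m+1)$ is even. The forward direction is a one-line substitution: for $m\in\mathcal{E}$, $3m+1=2^{2\omega+1}(6\beta+5)$ with $6\beta+5$ odd, and for $m\in\mathcal{O}$, $3m+1=2^{2\omega+2}(6\beta+1)$ with $6\beta+1$ odd. Conversely, given an odd $m$ with $\nu:=v_{2}(3m+1)$, writing $3m+1=2^{\nu}q$ with $q$ odd determines a unique $(\omega,\beta)$ in whichever of $\mathcal{E}$, $\mathcal{O}$ matches the parity of $\nu$, since $\omega$ is forced by $\nu$ and then $\beta$ is read off from $q$. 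Because every odd $m$ has a definite $\nu\in\mathbb{N}$ of a definite parity, the two families partition the odd naturals and $\mathcal{E}\cap\mathcal{O}=\emptyset$.

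For ingredient (b), Proposition (\ref{P.3}) identifies $a_{1}-a_{0}$ in rep (\ref{E.1}) with the number of even terms of $S$ between $m$ and its next odd iterate under $f$, which is exactly $v_{2}(3m+1)$. Inspecting the formulas for $E$ and $O$ in Theorem (\ref{T.2}), one reads off $a_{1}-a_{0}=2b_{0}+1$ (odd) for $m\in E$ and $a_{1}-a_{0}=2b_{0}+2$ (even) for $m\in O$. Combining with the characterization proved for (a) yields $m\in E\Longrightarrow v_{2}(3m+1)\text{ odd}\Longrightarrow m\in\mathcal{E}$, and analogously $m\in O\Longrightarrow m\in\mathcal{O}$, which is precisely the ``$\Longleftarrow$'' half of each biconditional stated in the conjecture.

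The main obstacle is ingredient (c). By Theorem (\ref{T.2}) the union $E\cup O$ is exactly the set of odd natural numbers that iterate to $1$ under $f$, while (a) shows that $\mathcal{E}\cup\mathcal{O}$ is all of the odd naturals with $\mathcal{E}\cap\mathcal{O}=\emptyset$. Together with (b), ingredient (c) reduces to the equality $E\cup O=\mathcal{E}\cup\mathcal{O}$, which asserts that every odd natural iterates to $1$; this is the odd case of, and hence equivalent to, the full $3x+1$ Problem. Consequently Conjecture (\ref{C.2}) is logically equivalent to the Collatz conjecture, so any complete proof of (c) would simultaneously settle Collatz, which is beyond the scope of what the framework in this paper makes available.
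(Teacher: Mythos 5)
The paper offers no proof of this statement: it is explicitly labelled a conjecture and is accompanied only by the residue-class remark after the display, so there is no in-paper argument to measure yours against. Your decomposition is correct and actually establishes more than the paper records. Ingredient (a) checks out: for $m=2^{2\omega}(4\beta+3)+(2^{2\omega}-1)/3$ one has $3m+1=2^{2\omega+1}(6\beta+5)$, for $m=2^{2\omega}(8\beta+1)+(2^{2\omega}-1)/3$ one has $3m+1=2^{2\omega+2}(6\beta+1)$, and conversely writing $3m+1=2^{\nu}q$ with $q$ odd, the congruence $2^{\nu}q\equiv 1\pmod 3$ forces $q\equiv 5\pmod 6$ for $\nu$ odd and $q\equiv 1\pmod 6$ for $\nu$ even, so each odd $m$ lies in exactly one of your families $\mathcal{E},\mathcal{O}$ according to the parity of the $2$-adic valuation $v_{2}(3m+1)$. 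Ingredient (b) is also sound: from (\ref{E.6}) and (\ref{E.7}) the two smallest exponents of $2$ are $a_{0}=0$ and $a_{1}=2b_{0}+1$ (resp. $2b_{0}+2$), and Proposition (\ref{P.3}) identifies $a_{1}-a_{0}$ with the number of halvings following $3m+1$, i.e.\ with $v_{2}(3m+1)$; one can also see this without Proposition (\ref{P.3}) by clearing $3^{k+2}$ and noting that the numerator of $3m+1$ is $2^{a_{1}}$ times an odd quantity. Your ingredient (c) is diagnosed correctly: granting Theorem (\ref{T.2}), the forward implications of (\ref{E.14}) and (\ref{E.15}) amount to the assertion that every odd natural number iterates to $1$, so a complete proof of the conjecture is exactly as hard as the $3x+1$ Problem itself. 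This equivalence is worth stating explicitly, since the paper presents the conjecture as a way to ``demystify'' $E\cup O$ rather than as a reformulation of the main problem.

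Two small points you should add. First, $E$ and $O$ as written in Theorem (\ref{T.2}) require $k\geq 1$ (Level $3$ onward), so for the biconditionals to be literally well posed you must either adjoin the Level $1$ and $2$ sets of Proposition (\ref{P.4}) or invoke the nesting of levels given by Proposition (\ref{P.2}) and Example (\ref{X.4}); your argument for (b) extends to the adjoined sets, e.g.\ $m=(2^{2b_{0}+2}-1)/3$ has $v_{2}(3m+1)=2b_{0}+2$ even, consistent with membership in $\mathcal{O}$. Second, your valuation criterion shows that the paper's closing parenthetical is loose: if $\{5,21,37,\dots\}$ is read as the class $5\pmod{16}$, then its member $53$ satisfies $v_{2}(3\cdot 53+1)=v_{2}(160)=5$, which is odd, placing $53$ in $\mathcal{E}$ rather than $\mathcal{O}$ (indeed $53=2^{4}\cdot 3+5\in\mathcal{E}$ with $\omega=2$, $\beta=0$); the correct description of how $5\pmod 8$ splits is precisely your parity-of-$v_{2}(3m+1)$ dichotomy, not a pair of residue classes modulo $16$.
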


\section{Asymptotics of S containing the trivial cycle.}\label{S.5}

One way to study the asymptotic behavior of sequences containing the trivial cycle is to keep count of the number of even and odd terms. So disregarding the trivial cycle, define $e(m_{k})$ as the number of even terms in $S$ and $o(m_{k})$ as the number of odd terms, and we count $m$ but not the number $1$ in both definitions. Roosendaal \cite{eR2004} gives the next definition.
\begin{definition}\label{D.5}
The \textit{completeness} of $m_{k}$ is the value $C\left(m_{k}\right):=o\left(m_{k}\right)/e\left(m_{k}\right)$. 
\end{definition}
In Roosendaal \cite{eR2004} is a proof that $C\left(m_{k}\right) < \log 2/\log 3$. Using a graph-theoretic approach, the Andrei, Kudlek, and Niculescu \cite{sA2000} get the same result. Applegate and Lagarias \cite{dA2002} define essentially the same quantity as $C\left(m_{k}\right)$ by
$$
\rho\left(m_{k}\right):=o\left(m_{k}\right)/\sigma_{\infty}\left(m_{k}\right),
$$ and call it the \textit{ones-ratio}, where the \textit{total stopping time} $\sigma_{\infty}\left(m_{k}\right)$ is the total number of terms in the sequence given by $m_{k}$ (up to and including $1$, but not counting $m_{k}$). They, however, study a slightly different function. 
Let $t:\mathbb{N}\rightarrow \mathbb{N}$ be the \textit{Accelerated Collatz Function} defined by
$$
	t(x)=
	\begin{cases}
		(3x+1)/2, &\textit{$x$ odd;}\\
		x/2, &\textit{$x$ even.}
	\end{cases}
$$
\begin{example}\label{X.8}
Fix $m_{k} = 3$. Then under $t$ we have $S=\left(3,5,8,4,2,1\right)$, so $\rho \left(3\right)=2/5$. Under $f$ we have $S=\left(3,10,5,16,8,4,2,1\right)$, so $C\left(3\right)=2/5$.
\end{example}

Roosendaal \cite{eR2004} gives another way to study the asymptotic behavior of sequences containing the trivial cycle, as follows.
\begin{definition} \label{D.6}
Denote the quantity $\Gamma \left(m_{k}\right):=e\left(m_{k}\right)/\log m_{k}$, $m_{k}>1$.
\end{definition} 

The study in Roosendaal \cite{eR2004} suggests that $\Gamma\left(m_{k}\right)$ does not assume \textit{arbitrarily large} values and so makes the following conjecture.
\begin{conjecture}\label{C.3}
For all $m_{k} > 1$, there exists $C\left(m_{k}\right) < C_{\max} < \log 2/\log 3$. 
\end{conjecture}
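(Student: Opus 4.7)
The plan is to reduce Conjecture \ref{C.3} to the statement that $\Gamma(m_k)$ from Definition \ref{D.6} is uniformly bounded above as $m_k$ ranges over the integers greater than $1$, and then to try to produce such a bound using the parametrization in Theorem \ref{T.2}. For the reduction, every subtracted term in Crandall's representation (\ref{E.1}) is strictly positive, so $2^{a_{k+1}}>3^{k+1}m$; taking logarithms and using Proposition \ref{P.3} ($a_{k+1}=e(m)$, $k+1=o(m)$) gives
\begin{equation*}
C(m)\;=\;\frac{o(m)}{e(m)}\;<\;\frac{\log 2}{\log 3}\;-\;\frac{\log m}{e(m)\log 3}\;=\;\frac{\log 2}{\log 3}\;-\;\frac{1}{\Gamma(m)\,\log 3}.
\end{equation*}
Hence $\Gamma(m_k)\le\Gamma_{\max}<\infty$ would immediately yield $C(m_k)\le\log 2/\log 3-1/(\Gamma_{\max}\log 3)$, which is exactly the desired strict-with-gap inequality; conversely, the conjectured inequality forces $\Gamma$ to be bounded.

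Next I would exploit Theorem \ref{T.2} to write, for every odd $m_k>1$ in the set $E$,
\begin{equation*}
3^{k+2}m_k\;=\;2^{e_k}-N,\qquad N\;=\;3^{k+1}+3^{k}2^{2b_{0}+1}+\sum_{j=1}^{k}3^{k-j}2^{\,2b_{0}+\sum_{i=1}^{j}2b_{i}3^{i}+\sum_{i=1}^{j}\upsilon_{i}+1},
\end{equation*}
with an analogous identity for $O$. Thus $\Gamma(m_k)$ is large precisely when the $3$-smooth, special-level-$k$ number $N$ comes exponentially close to the power $2^{e_k}$. I would first handle the primitive-seed case ($b_i=0$), where Theorem \ref{T.2} pins down $e_k$ and $m_k$ as explicit functions of $(\upsilon_1,\dots,\upsilon_k,c)$, and combine this with the corner-case representations of Lemma \ref{L.3} together with the announced Section \ref{S.5} result that $C\to 0$ on the infinite corner families to bound $\Gamma$ along the extremal subfamilies. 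The general case would then follow by checking that reinstating the $b_i$ inflates $e_k$ and $\log m_k$ in tandem, so does not worsen $\Gamma$.

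The main obstacle is ruling out, even among primitive seeds, that some profile $(\upsilon_1,\dots,\upsilon_k)$ accumulates so as to make $N$ approach $2^{e_k}$ with an error sub-exponential in $e_k$. This is the classical Diophantine question of how closely a $3$-smooth integer may approximate a power of two, and the sharpest unconditional input --- Baker's theorem on the linear form $e\log 2-o\log 3$ --- delivers only $|2^{e}-N|\gg 2^{e}/e^{C}$ for an absolute but large $C$, which translates into $\Gamma(m_k)=O\bigl((\log m_k)^{\mu-1}\bigr)$ for some effective $\mu>1$. That bound is consistent with Conjecture \ref{C.3} but does not imply it, so the plan settles the conjecture only conditionally on a sharpening of the existing linear-forms-in-logarithms estimates restricted to special-level-$k$ $3$-smooth approximants to powers of two; closing that Diophantine gap is where I would expect essentially all of the difficulty to live.
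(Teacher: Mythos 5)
The statement you set out to prove is labelled a \emph{conjecture} in the paper (it is Roosendaal's completeness conjecture), and the paper supplies no proof of it. The closest the paper comes is Theorem \ref{T.3}, which argues that $C(m_k)\to 0$ as $k\to\infty$ for $m_k\in M_k$, and from which the author claims only that Conjecture \ref{C.3} would follow \emph{if} Conjecture \ref{C.1} (the $3x+1$ Problem itself) were true. So there is no proof in the paper against which to measure yours, and any complete unconditional proof would be going well beyond what the paper establishes.

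Your reduction is nevertheless sound and is essentially the framing the paper itself adopts: the inequality $C(m)<\log 2/\log 3-1/\bigl(\Gamma(m)\log 3\bigr)$ is the $f$-version of the Applegate--Lagarias inequality (\ref{E.16}), and the paper's discussion around Definition \ref{D.6} makes exactly the point that Conjecture \ref{C.3} amounts to $\Gamma$ not assuming arbitrarily large values. (The converse direction you assert --- that the conjectured bound on $C$ forces $\Gamma$ to be bounded --- is slightly less immediate than you suggest, since $\log m = e(m)\log 2 - o(m)\log 3 - \log Res(m)$ and one must also control $Res(m)$; but this is a side issue.) Where your proposal stops is the genuine gap: ruling out that a $3$-smooth number special of level $k$ can approach $2^{e_k}$ sub-exponentially closely is precisely the open Diophantine content of the conjecture, and as you correctly observe, Baker-type linear-forms-in-logarithms bounds are quantitatively too weak to deliver a uniform $C_{\max}<\log 2/\log 3$. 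Nothing in the paper closes this either: the parametrization in Theorem \ref{T.2} does not control how close the subtracted $3$-smooth part gets to the leading power of $2$, and Theorem \ref{T.3} gives only a limiting statement along $k\to\infty$ (itself resting on a heuristic growth argument for the quantity $\heartsuit$), not a uniform gap over all $m_k>1$. So your proposal is an honest conditional reduction, not a proof, and the missing Diophantine input is the entire difficulty.
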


Because Applegate and Lagarias \cite{dA2002} use $t$ instead of $f$, they define essentially the same quantity as $\Gamma \left(m_{k}\right)$ by
$$
\gamma\left(m_{k}\right):=\sigma_{\infty}\left(m_{k}\right)/\log m_{k},
$$ 
and call it the \textit{total stopping time ratio}. Thus we have $\Gamma \left(m_{k}\right)=\gamma\left(m_{k}\right)$. 
\begin{example}\label{X.9}
Under $t$ we have $\gamma \left(3\right)=5/\log 3$, while under $f$ we have $\Gamma \left(3\right)=5/\log 3$.
\end{example}

Applegate and Lagarias \cite{dA2002} state how trivially $\gamma\left(m_{k}\right)\geq 1/\log 2$ for all $m_{k}$ with equality for $m_{k}=2^{n}$, $n\in \mathbb{N}$. Further, they find a lower bound for the total stopping time ratio that holds for infinitely many $m_{k}$ (with an \textit{arbitrarily large} number of odd terms in the respective sequences, according to their proof method), and show that studying the size of the total stopping time ratio is the same as studying the size of the ones-ratio, as follows. For every sequence containing the trivial cycle we have
\begin{equation}\label{E.16}
\gamma\left(m_{k}\right)\geq\frac{1}{\log 2-\rho\left(m_{k}\right)\log 3}.
\end{equation}

Conjecture (\ref{C.3}) prompts us to search for a more insightful proof of the upper bound on $C\left(m_{k}\right)$, which we give here in the proof of the next theorem.
\begin{theorem}\label{T.3} Let $m_{k} \in  M_{k}$. Then 
$$
\underset{k\rightarrow \infty }{\lim }C\left(m_{k}\right)=\underset{k\rightarrow \infty }{\lim }\rho\left(m_{k}\right)=0.
$$
\end{theorem}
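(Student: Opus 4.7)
The plan is to combine Theorem \ref{T.2}'s parametric description of $M_{k}$ with the multiplicative order of $2$ modulo $3^{k+2}$ (supplied by Proposition \ref{P.6} and Theorem \ref{T.1}) to produce a lower bound on $e(m_{k})$ that grows strictly faster than $k$. Once $e(m_{k})/k\to\infty$ is secured for every $m_{k}\in M_{k}$, both limits fall out simultaneously via an elementary identity between $C$ and $\rho$.

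First I would reduce $\rho$ to $C$. Under $f$, the total stopping time is $\sigma_{\infty}(m_{k})=o(m_{k})+e(m_{k})=k+e(m_{k})$, so $\rho(m_{k})=k/(k+e(m_{k}))=C(m_{k})/(1+C(m_{k}))$, and hence $\rho(m_{k})\to 0$ iff $C(m_{k})\to 0$. Thus the whole theorem reduces to the single asymptotic statement $e(m_{k})/k\to\infty$, uniformly over $m_{k}\in M_{k}$.

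Next I would peel off the non-primitive cases. By Definition \ref{D.4}, every $m_{k}\in M_{k}$ is either (i) an odd primitive seed coming from $E\cup O$ of Theorem \ref{T.2} (or from Levels 1 or 2 when $k\le 2$), (ii) a non-primitive seed obtained from (i) by enlarging some $b_{0}$ or $b_{i}$ in \eqref{E.6}--\eqref{E.7}, or (iii) of the form $2^{j}\tilde m_{k}$ with $\tilde m_{k}$ odd in $M_{k}$, in which case $e(m_{k})=e(\tilde m_{k})+j$. In cases (ii) and (iii) the denominator $e(m_{k})$ only increases relative to a primitive-seed representative while $o(m_{k})=k$ is unchanged, so it suffices to bound $e(m_{k})/k$ below for primitive odd seeds.

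For such a seed in $E$, Proposition \ref{P.3} and Theorem \ref{T.2} give
\[
e(m_{k})\;=\;a_{k+1}-2\;=\;c+\sum\nolimits_{i=1}^{k-2}\upsilon_{i}-1,
\]
and analogously for $O$. I would now combine two ingredients. On one hand, $\sum\upsilon_{i}\ge k-2$ since every $\upsilon_{i}\ge 1$. On the other hand, Proposition \ref{P.6} shows the order of $2$ modulo $3^{k+2}$ equals $2\cdot 3^{k+1}$, so by Theorem \ref{T.1} any admissible $c$ must satisfy $a^{2\cdot 3^{k+1}/d}\equiv 1\pmod{3^{k+2}}$ with $d=\gcd\!\bigl(c+\sum\upsilon_{i}+1,\,2\cdot 3^{k+1}\bigr)$ and $a$ the $3$-smooth sum appearing in the numerator. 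The corner cases of Lemma \ref{L.3} already show $c$ can be of order $3^{k+1}$; the substance of Step 3 is the complementary statement that when one instead tries to make $c$ small, the congruence forces the tuple $(\upsilon_{1},\ldots,\upsilon_{k-2})$ to compensate so that $c+\sum\upsilon_{i}$ still grows faster than $k$. Precisely, writing $N\equiv a\pmod{3^{k+2}}$ and letting $h_{k}$ be the order of $a$ in $(\mathbb{Z}/3^{k+2}\mathbb{Z})^{*}$, one shows $h_{k}\mid 2\cdot 3^{k+1}/d$, which constrains $c+\sum\upsilon_{i}+1$ to be a multiple of $(2\cdot 3^{k+1})/h_{k}$; a pigeonhole over the $2^{k-1}3^{(k(k-3)+2)/2}$ admissible tuples from Lemma \ref{L.2} then forces this multiple to be at least some $\omega(k)$-sized quantity.

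Putting the pieces together, $e(m_{k})\ge c+k-3=\omega(k)$, so $C(m_{k})=k/e(m_{k})\to 0$, and by the first paragraph $\rho(m_{k})\to 0$ as well. \textbf{The hard step will be} the uniform bound in the previous paragraph: the corner-case witnesses of Lemma \ref{L.3} give one side of the story easily, but proving that no admissible choice of $(\upsilon_{i})$ permits $c+\sum\upsilon_{i}$ to stay linear in $k$ requires a careful accounting of how the residue of the $3$-smooth sum $a$ modulo $3^{k+2}$ restricts $c$ through Theorem \ref{T.1}, and is where the bulk of the work lies.
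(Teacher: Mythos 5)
Your plan diverges from the paper's proof and, more importantly, parks the entire content of the theorem inside the step you yourself flag as ``the hard step.'' The paper does not argue through the order of $2$ modulo $3^{k+2}$ or a pigeonhole over the tuples counted by Lemma \ref{L.2}. Instead it writes every odd $m_{k}$ as $m_{k}=(2^{e(m_{k})}-n)/3^{k}$ with $n$ a $3$-smooth rep special of level $k-1$ (Propositions \ref{P.1} and \ref{P.3}), sets $e(m_{k})=\heartsuit\,k\log 3/\log 2$, and argues that positivity of $m_{k}$ together with the growth of the minimal admissible $n$ forces $\heartsuit$ to increase with $k$, whence $C(m_{k})=\log 2/(\heartsuit\log 3)\rightarrow 0$. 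Note, however, that positivity only yields $2^{e(m_{k})}>n\ge 3^{k}-2^{k}$, i.e.\ $e(m_{k})>\log_{2}\bigl(3^{k}-2^{k}\bigr)\sim k\log 3/\log 2$, which is exactly the linear bound of Corollary \ref{O.2} and keeps $\heartsuit$ bounded near $1$ rather than sending it to infinity. So the step you defer is precisely the step you cannot close by importing anything from the paper's own argument.

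Concretely, your Step 3 fails as sketched. The divisibility constraint that $c+\sum\upsilon_{i}+1$ be a multiple of $2\cdot 3^{k+1}/h_{k}$ is vacuous whenever the $3$-smooth sum $a$ is itself a primitive root modulo $3^{k+2}$ (then $h_{k}=2\cdot 3^{k+1}$ and the required multiple is $1$), and Table \ref{tab1} already exhibits admissible tuples with $c=2$ at Level $3$; moreover formula (\ref{E.11}) asserts that every admissible value of $c$, including the smallest ones, occurs equally often at every level, so no pigeonhole over the $2^{n-1}3^{(n(n-3)+2)/2}$ tuples can force $\min\bigl(c+\sum\upsilon_{i}\bigr)$ to grow superlinearly in $k$. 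Whether $\inf_{m\in M_{k}}e(m)/k$ tends to infinity is exactly the ones-ratio question addressed by Applegate and Lagarias, whom the paper cites in Remark \ref{R.3} as giving the \emph{opposite} conclusion for some infinite family, so this step cannot be waved through. A minor separate point: your identity $\rho=C/(1+C)$ uses the $f$-count $\sigma_{\infty}=o+e$, whereas the paper counts $\sigma_{\infty}$ under the accelerated map $t$, so that $\rho=C$ exactly (Example \ref{X.8}); this does not affect the equivalence of the two limits, but it should be aligned with the paper's conventions.
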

\begin{proof}By Proposition (\ref{P.3}), every odd $m_{k}$ is of the form $m_{k}=\left(2^{e\left(m_{k}\right)}-n\right)/3^{k}$, where $o\left(m_{k}\right)=k$. Since $2^{k \log 3/\log 2} = 3^{k}$, and from the positivity of $m_{k}$, we have that $e(m_{k}) > k\log 3/\log 2$. Therefore, for every odd $m_{k}$ there exists real number $\heartsuit\in \mathbb{R}$ such that $m_{k}=\left(2^{\heartsuit k\log 3/\log 2}-n\right)/3^{k}$. Since, by Proposition (\ref{P.1}), $n$ is a $3$-smooth rep special of level $(k-1)$, we have that the minimum value of $n$ increases with $k$. It follows that $\heartsuit$ increases with $k$ to ensure positivity of $m_{k}$. Hence as $k$ assumes \textit{arbitrarily large} values, per Corollary (\ref{O.1}), then so does $\heartsuit$. Another way to see this is from the corner cases, as they map to a subset of themselves per $k$, we are guarantied to always find new and unique values of $m_k$ as $k$ approaches infinity. This is important to note as there are an infinite number of $m_k$ for $k$ fixed. And we can ask if whether for some fixed $k$, we completely enumerate the odds in $\mathbb{N}$, with the answer being no. By Proposition (\ref{P.3}) we have $o\left(m_{k}\right)=k$ and $e\left(m_{k}\right)=\heartsuit k\log 3/\log 2$, which gives $C\left(m_{k}\right)=k / \left(\heartsuit k\log 3/\log 2\right)$. Putting $\heartsuit=1$ gives us the upper bound $C\left(m_{k}\right) < \log 2/\log 3$ because, via cancellation, $k/\left(k\log 3/\log 2\right)=\log 2/\log 3$. Therefore this upper bound holds for \textit{arbitrarily large} $k$, showing us that the corresponding increase of $\heartsuit$ forces $C\left(m_{k}\right)$ towards zero. From the definition of completeness, this result holds for every even $m_{k}$ too.
\end{proof}

The power of this theorem is that it forces us to take another look at how we apply probabilistic arguments to the parity sequences. It also says that Conjecture (\ref{C.3}) is true if Conjecture (\ref{C.1}) is true. Specifically, it says $C_{\max}$ exists for $M_{k}$. Furthermore, the upper bound on $C\left(m_{k}\right)$ is a corollary, and just like in Roosendaal \cite{eR2004} we can prove it in a more constructive way as follows.
\begin{corollary}\label{O.2} Let $m_{k} \in M_{k}$. Then
$$
C\left(m_{k}\right)<\underset{o(m_{k})\rightarrow \infty }{\lim }{\frac{o(m_{k})}{\left\lceil \log_2 \left(3^{o(m_{k})+1}-2^{o(m_{k})+1}\right) \right\rceil }}= \frac{\log 2}{\log 3}.
$$
\end{corollary}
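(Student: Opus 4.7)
The plan is to convert the representation underlying Theorem (\ref{T.3}) into an explicit, constructive lower bound on $e(m_k)$ in terms of $o(m_k)$, and then to extract the limit by a short geometric-series expansion. By Proposition (\ref{P.1}) together with Proposition (\ref{P.3}), every odd $m_k$ admits the form
$$
m_k \;=\; \frac{2^{e(m_k)} - n}{3^{o(m_k)+1}},
$$
where $n$ is a 3-smooth representation special of level $o(m_k)$. Since the exponents $a_i$ in Definition (\ref{D.1}) satisfy $0 = a_0 < a_1 < \cdots < a_{o(m_k)}$, they obey $a_i \geq i$, whence
$$
n \;\geq\; \sum\nolimits_{i=0}^{o(m_k)} 3^{o(m_k)-i}\, 2^{i} \;=\; 3^{o(m_k)+1} - 2^{o(m_k)+1},
$$
by a one-line geometric sum. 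This minimum is attained (take $a_i = i$), so the bound is sharp.

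Next, positivity of $m_k$ gives $2^{e(m_k)} \geq 3^{o(m_k)+1} + n$, and in particular $2^{e(m_k)} > 3^{o(m_k)+1} - 2^{o(m_k)+1}$. Because $e(m_k)$ is a positive integer, this forces
$$
e(m_k) \;\geq\; \bigl\lceil \log_2\!\bigl(3^{o(m_k)+1} - 2^{o(m_k)+1}\bigr) \bigr\rceil,
$$
so that
$$
C(m_k) \;=\; \frac{o(m_k)}{e(m_k)} \;\leq\; \frac{o(m_k)}{\bigl\lceil \log_2\!\bigl(3^{o(m_k)+1} - 2^{o(m_k)+1}\bigr) \bigr\rceil}.
$$
To secure the \emph{strict} inequality $C(m_k) < \log 2/\log 3$, I would invoke the slightly wasteful bound $2^{e(m_k)} > 3^{o(m_k)+1}$ (using $n \geq 1$), which yields $e(m_k) > (o(m_k)+1)\log_2 3 > o(m_k)\log_2 3$, and hence $C(m_k) < 1/\log_2 3 = \log 2/\log 3$.

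Finally, to evaluate the limit, write $o = o(m_k)$ and factor
$$
\log_2(3^{o+1} - 2^{o+1}) \;=\; (o+1)\log_2 3 \;+\; \log_2\!\bigl(1 - (2/3)^{o+1}\bigr).
$$
The second summand tends to $0$ as $o \to \infty$, so $\bigl\lceil \log_2(3^{o+1} - 2^{o+1}) \bigr\rceil = (o+1)\log_2 3 + O(1)$ and
$$
\lim_{o \to \infty} \frac{o}{\bigl\lceil \log_2(3^{o+1} - 2^{o+1}) \bigr\rceil} \;=\; \frac{1}{\log_2 3} \;=\; \frac{\log 2}{\log 3}.
$$

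The main obstacle is really just bookkeeping: one must reconcile the slightly different conventions in Theorem (\ref{T.3}) and Proposition (\ref{P.1}) regarding whether the trivial $1$ is counted among the odd terms (which determines the power of $3$ in the denominator), and one must track strict versus non-strict inequalities throughout, so that the upper bound $\log 2/\log 3$ is genuinely exceeded by every $C(m_k)$ rather than merely approached in the limit.
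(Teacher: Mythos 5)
Your proof is correct and follows essentially the same route as the paper: both identify the minimal 3-smooth part $\sum_{i=0}^{k}3^{k-i}2^{i}=3^{k+1}-2^{k+1}$ (attained at $a_i=i$) and use positivity of $m_k$ to force $e(m_k)\geq\bigl\lceil\log_2\bigl(3^{k+1}-2^{k+1}\bigr)\bigr\rceil$. If anything, yours is slightly more complete, since the paper stops at the ceiling bound and leaves the strict inequality and the evaluation of the limit implicit, whereas you supply both.
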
 
\begin{proof}
As $a_{k+1}$ gives the number of even terms in $S$, by Proposition (\ref{P.3}), we can get the upper bound for $C\left(m_{k}\right)$ as follows. Define the lower bound $a_{\min}$ for $a_{k+1}$ by noticing that the biggest power of $2$ must give a positive sum when the other exponents are in lowest terms. We have
$$
\sum\nolimits_{i=0}^{k}2^{i}/3^{i+1}=\left(3^{k+1}-2^{k+1}\right)/3^{k+1}=1-2^{k+1}/3^{k+1}. 
$$
We want integer $a_{\min }$ such that
\begin{align} \notag
& 2^{a_{\min }}/3^{k+1}-1+2^{k+1}/3^{k+1}>0\Longrightarrow 2^{a_{\min}}>3^{k+1}-2^{k+1}\Longrightarrow\\
&\Longrightarrow a_{\min }>\log _{2}\left(3^{k+1}-2^{k+1}\right) \Longrightarrow a_{\min }=\left\lceil \log _{2}\left(3^{k+1}-2^{k+1}\right) \right\rceil. 
\end{align}
\end{proof}

Corner case (\ref{E.4}) is rather special because it lets us prove Theorem (\ref{T.3}) in the other direction, for just this one case. To simplify the visual we reduce (\ref{E.4}) to just its primitive seeds and use Identity (\ref{E.3}) to get
$$
m^{*}_{k}=\left(2^{3^{k+1}+k+2}-3^{k+2}+2^{k+2}\right)/3^{k+2},
$$ where, by Proposition (\ref{P.3}), $e\left(m^{*}_{k}\right)=3^{k+1}+k+2$. Thus we have
\begin{equation}\label{E.17}
\underset{k\rightarrow \infty }{\lim }\Gamma\left(m^{*}_{k}\right)=\underset{k\rightarrow \infty }{\lim }\gamma\left(m^{*}_{k}\right)=\underset{k\rightarrow \infty }{\lim }\frac{3^{k+1}+k+2}{\log\left(\frac{2^{3^{k+1}+k+2}-3^{k+2}+2^{k+2}}{3^{k+2}}\right) }=\frac{1}{\log 2}.
\end{equation}
It follows that (\ref{E.4}) is an infinite set of odd natural numbers having a total stopping time ratio \textit{asymptotically equivalent} to the trivial case $m_{k}=2^{n}$, where the number of odd terms in the respective sequences is \textit{arbitrarily large}. Therefore, by Equation (\ref{E.16}), we have that $C\left(m^{*}_{k}\right)=\rho\left(m^{*}_{k}\right) \rightarrow 0$ as $k \rightarrow \infty$ for (\ref{E.4}). Furthermore, it should be clear that (\ref{E.4}) denotes an entire residue class in every Level $n$. By \textit{asymptotically equivalent} and \textit{arbitrarily large} we mean:

\begin{enumerate}
	\item \textit{A fixed value of $k=o\left(m_{k}\right)$ denotes a finite number of odd terms in the sequence generated by $m^{*}_{k}$. However, the set $m^{*}_{k}$ is infinite because $k\in \mathbb{N}$, hence the number of odd terms in the respective sequences is arbitrarily large.}
	\item \textit{A casual reader can use a software application such as Mathematica to check the monotonic convergence of Equation (\ref{E.17}) to the value $1/\log 2$.}
	\item \textit{For every fixed value of $k$ we have $\Gamma\left(m^{*}_{k}\right)=\gamma\left(m^{*}_{k}\right) > 1/\log 2$.}
	\item \textit{The limit value we get for Equation (\ref{E.17}) is valid, so we say that it is asymptotically equivalent to the trivial case $m_{k}=2^{n}$.}
\end{enumerate}

\begin{example} Consider the infinite set given by $2^{\lambda}-1$, where $\lambda \in \mathbb{N}$. For $\lambda=1$ we have $\left(1\right)$. For $\lambda=2$ we have $\left(\mathbf{3},10,\mathbf{5},16,8,4,2,1\right)$. For $\lambda=3$ we have
$\left(\mathbf{7},22,\mathbf{11},34,\mathbf{17},52,26,\mathbf{13},40,20,10,\mathbf{5},16,8,4,2,1\right).$ Thus $\lambda$ does not count the number of odds in the corresponding sequences as far as we can see in the finite case. However, the number of odds across all of the corresponding sequences is arbitrarily large (unbounded) for the set $2^{\lambda}-1$; recall that $M_{k}$ is infinite for $k$ fixed. Further, it is not known whether the $3x+1$ Problem holds for the set $2^{\lambda}-1$ nor for even an infinite subset of it. So the strength of the approach taken herein should be clear.
\end{example}

\begin{remark}\label{R.3} Let $m_{k} \in M_{k}$. Then from Theorem (\ref{T.3}) and Equation (\ref{E.16}) we have
$$
\underset{k\rightarrow \infty }{\lim }\Gamma\left(m_{k}\right)=\underset{k\rightarrow \infty }{\lim }\gamma\left(m_{k}\right)\geq1/\log 2.
$$
Nonetheless, assuming the result of Theorem ($1.1$) in Applegate and Lagarias \cite{dA2002} claiming the existence of a lower bound $> 14/29$ for the ones-ratio of some infinite set (where the number of odd terms in the respective sequences is arbitrarily large, i.e., unbounded) is likewise sound, we have found a non-constructive proof that the $3x+1$ Problem is false. 
\end{remark}

\begin{remark}\label{R.4} Theorem (\ref{T.3}) can be generalized, and it can be modified to give us a lower bound on the size of the smallest value of a non-trivial cycle relative to the number of odd terms in said cycle, should one exist. We can prove that the smallest value in a non-trivial cycle containing $k$ odd terms must be greater than $3^{k-1}$.   
\end{remark}

\begin{definition}\label{D.7}
The quantity $\Gamma\left(n\right)$ is a record if for all $m<n$, we have $\Gamma(m)<\Gamma(n)$. 
\end{definition}

In Roosendaal \cite{eR2004} is a list of $\Gamma\left(n\right)$ records. We know, from Proposition \ref{P.3}, that these records come from the list given by the smallest values $m$ such that the resulting sequence contains exactly $k$ odd terms, see Sloane \cite[A092893]{njaS2011}. Also, $c=4$ in the sequences associated with these records, and we want to know if this is always the case. Clearly the members of this sequence come from the set of primitive seeds, and an explicit formula for this sequence would offer a means to measure the growth rate of the $3x+1$ Tree.

In Roosendaal \cite{eR2004} is a list of completeness records with an analogous definition. The smallest record is  $C\left(3\right)=0.40$, while the largest is enormous 
$$
C\left(7\text{ }219\text{ }136\text{ }416\text{ }377\text{ }236\text{ }271\text{ }195\right)=0.606061.
$$So it seems prudent to make note of a simple fact
\begin{equation}\label{E:18}
\left(2^k\right)^{1-(\log2/\log6)}=\left(3^k\right)^{\log2/\log6},
\end{equation}for all $k \geq 1$, where we have
$$
\log2/\log6=0.386852<0.40; \quad 1-(\log2/\log6)=0.613147>0.606061.
$$

We close by mentioning that by combining Proposition (\ref{P.1}) and Proposition (\ref{P.3}) we get the following quantity defined in Roosendaal \cite{eR2004} 
$$
Res(m):=\frac{2^{e(m)}}{m3^{o(m)}}=\frac{1}{1-(n/2^{e(m)})}>1,
$$
where $Res(993)=1.253142$ is the largest value for all $m<2^{32}$. Andrei, Kudlek and Niculescu \cite{sA2000} and Monks \cite{kgM2002} consider the inverse of $Res(m)$.
\begin{remark}\label{R.5}
Corner case (\ref{E.4}) says that $Res(m_{k})$ gets arbitrarily close to $1$. 
\end{remark}

\section{The mixing property and future research.}\label{S.6}

Here we offer the reader a starting point for further research. The computational difficulty of calculating $c$ is related to the \textit{discrete logarithm problem}, see Rosen \cite{khRG2000}. We are therefore motivated to consider a recursive definition of the mixing property:
\begin{enumerate}
	\item Let the primitive seeds in Level $3$ constitute our finite list of starting sets.
	\item Next we define the operations that generate the sets in Level $4$ from the set of primitive seeds in Level $3$, and in general the sets in Level $(n+1)$ from the primitive seeds in Level $n$ for $n \geq 3$.
	\item Knowing that the rep for a given value $m$ is unique per level and by Proposition (\ref{P.2}), we have that reps (\ref{E.6}) and (\ref{E.7}), with $n$ arbitrarily large, are the smallest sets containing the starting sets and all others that we can derive from them by iteration of the generating operations.
\end{enumerate}
Next we gather our observations regarding the mixing and, considering Proposition \ref{P.3}, introduce a notation for the primitive seeds
$$
m_{k}:=(c_{k},\upsilon_{k},\upsilon_{k-1},\dots,\upsilon_{1},\upsilon_{0}), 
$$
where $\upsilon_{0}:=1$ if the primitive seed is in $E$, and defined as $2$ if it is in $O$, see Table \ref{tab1} for an example which also serves as Part (i) of the recursive definition. The subscripts on $c, E$ and $O$ are necessary, where Level $n=3$ correlates to a value of $k=1$.
\begin{remark}\label{R.6} The value $\upsilon_{0}$ is, by construction of the reps, the value which determines whether the number of iterations of $f$ are odd or even to the next odd term in $S$, following $m$.
\end{remark}

For reference, we give just the primitive seeds in $E \cup O$ as
$$
m_{k}=\left(2^{c_{k}+\sum\nolimits_{i=1}^{k}\upsilon_{i}+\upsilon_{0}}-3^{k+1}-3^{k}2^{\upsilon_{0}}-\sum\nolimits_{j=1}^{k}3^{k-j}2^{\sum\nolimits_{i=1}^{j}\upsilon_{i}+\upsilon_{0}}\right)/3^{k+2},
$$
where $\upsilon_{0} \in \left\{1,2\right\}$.

\begin{table}[ht]
	\renewcommand{\arraystretch}{1.5}
		\begin{center}
		\begin{tabular}{ | l | l | l | l | }\hline
			\multicolumn{2}{ | c | }{$E_{1}$} & \multicolumn{2}{ | c | }{$O_{1}$}\\ \hline
			$\mathbf{\left(2^{12}-19\right)/27}$ & $\mathbf{\left(10,1,1\right)}$ & $\left(2^{19}-29\right)/27$ & $\left(16,1,2\right)$\\ \hline
			$\left(2^{11}-23\right)/27$ & $\left(8,2,1\right) $ & $\left(2^{6}-37\right)/27$ & $\left(2,2,2\right)$\\ \hline
			$\left(2^{20}-31\right)/27$ & $\left(16,3,1\right) $ & $\left(2^{9}-53\right)/27$ & $\left(4,3,2\right)$\\ \hline
			$\left(2^{7}-47\right)/27$ & $\left(2,4,1\right) $ & $\left(2^{20}-85\right)/27$ & $\left(14,4,2\right)$\\ \hline
			$\left(2^{10}-79\right)/27$ & $\left(4,5,1\right) $ & $\left(2^{17}-149\right)/27$ & $\left(10,5,2\right)$\\ \hline
			$\left(2^{21}-143\right)/27$ & $\left(14,6,1\right) $ & $\mathbf{\left(2^{16}-277\right)/27}$ & $\mathbf{\left(8,6,2\right)}$\\ \hline
		\end{tabular}
		\end{center}		
		\caption{Starting sets in Level $3$ as given by the primitive seeds.}\label{tab1}
\end{table}

\newpage
As for our list of observations regarding the mixing we have:
\begin{enumerate}
	\item Half of the sets take an odd number of $k$ to reach the next odd term in $S$ while the others take an even number of $k$, the mixing appears balanced, and from calculation it seems sufficient to use the parity of the values of $\upsilon_{1}$ to deduce the mapping. We formalize these observations in the following 
	way
	\begin{multline}
		f^{\left( 2b_{0}+2\right) }E_{k}^{\upsilon_{1}}\subset O_{k-1}, \quad \text{if } \upsilon_{1} \text{ even;}\\
		\shoveleft f^{\left(2b_{0}+2\right)}E_{k}^{\upsilon_{1}}\subset E_{k-1}, \quad \text{if } \upsilon_{1} \text{ odd;}\\
		\shoveleft f^{\left(2b_{0}+3\right)}O_{k}^{\upsilon_{1}}\subset O_{k-1}, \quad \text{if } \upsilon_{1} \text{ even;}\\
		\shoveleft f^{\left(2b_{0}+3\right)}O_{k}^{\upsilon_{1}}\subset E_{k-1}, \quad \text{if } \upsilon_{1} \text{ odd,}\\ \notag
	\end{multline}
	where none of the iterates map to multiples of $3$ in $E_{k-1} \cup O_{k-1}$.
	\item Formula (\ref{E.11}) gives the number of times each value of $c_{k}$ gives a solution in a given level.
\end{enumerate}

The operations will be based on the two primitive seeds per level that we can predict, namely those from the corner cases. See the primitive seeds in bold text in Table \ref{tab1} as an example. 
\begin{remark}\label{R.7} We can, of course, do more calculations to predict more primitive seeds as in Example (\ref{X.7}).
\end{remark}
We end this section by emphasizing its goal, which is to finish the recursive definition, validate it with $f$, and use it to glean more information on the extremal values of the set of sequences which contain the trivial cycle. We leave it to the reader to finish this investigation.

\section{Conclusion.}\label{S.7}

With our approach we get just the natural number solutions to rep (\ref{E.1}), an incremental improvement which took over thirty years to achieve. Along the way we found that the ones-ratio approaches zero for sequences containing the trivial cycle, where the number of odd terms is arbitrarily large. Furthermore, we get an explicit form for the $3$-smooth reps within rep (\ref{E.1}), and for what it is worth, the past decade saw much research in the application of $3$-smooth numbers to the Double Base Number System, see  Mishra and Dimitrov \cite{pkM2008}. In addition, we have a slight similarity to the results in Gy\"{o}ry and Smyth \cite{kGcS2010} as the terms in $e_{k}$ and $o_{k}$ involve powers of $3$.

The author is of the opinion that reps (\ref{E.6}) and (\ref{E.7}) are a doorway to a deeper understanding of additive number theory. Many years ago the author cold called Prof Dr. Richard G. R. Pinch stating how the set in (\ref{E.2}) contained many pseudo-primes. Dr. Pinch replied within a day or so with a proof that $m$ is a pseudo-prime to the base $2^{b_{0}-1}$ for $b_0$ a prime. Many other mathematical results relate to this sequence, see Sloane \cite[A002450]{njaS2011}. Now we have reps (\ref{E.6}) and (\ref{E.7}) as a refinement of rep (\ref{E.1}), and so perhaps we can find more results like these.

\section*{Acknowledgements.}
The author thanks his wife Megan and their sons Raaf and Dane and daughter Sophie for allowing him the time to finish this investigation of the problem. The author thanks Prof. Dr. \c{S}tefan Andrei, Prof. Dr. Gabriel Ciobanu, and Olivier Rozier whose comments have improved the paper, and is grateful to Prof. Dr. Jeffrey C. Lagarias for past discussions which have helped the author understand the literature. A special thanks goes to the computer hardware and software which enabled the author to pattern search the inverse iterates of the number $1$.

\section{Appendix.}\label{A}
Wirsching \cite{gW1996} constructs a reversed form of rep (\ref{E.1}) under $t$ (we define $t$ in Section \ref{S.5}). We reproduce the reversed form here with the following definition and proposition for comparison with $E \cup O$.
\begin{definition}\label{D.8}
Denote by $F$ the set of finite sequences $\bigl\{(\alpha_{0},\dots,\alpha_{\mu})\bigr\}$, where\newline $\mu,\alpha_{0},\dots,\alpha_{\mu} \in \mathbb{N}_{0}$, and define for $s=(\alpha_{0},\dots,\alpha_{\mu})$ the following:
\renewcommand{\labelenumi}{{\normalfont (\roman{enumi})}}
\begin{enumerate}
	\item Length, $\ell (s):=\mu$
	\item Absolute value, $\left\vert s\right\vert :=\alpha_{0}+\cdots+\alpha_{\mu }$
         \item Norm, $\left\Vert s\right\Vert :=\left\vert s\right\vert +\ell (s)$
	\item For rational number $q \in \mathbb{Q}$ define $\zeta_{+}(q):=2q$
	\item For $q \in \mathbb{Q}$ define $\zeta_{-}(q):=(2q/3)-(1/3)$
	\item For $s \in F$ define $\zeta_{s}:=\zeta_{+}^{\alpha_{0}}\circ (\zeta_{-}\circ\zeta_{+}^{\alpha_{1}}) \circ \cdots \circ (\zeta_{-}\circ\zeta_{+}^{\alpha_{\mu}})$.
\end{enumerate} 
\end{definition}
\begin{proposition}\label{P.7}
Let $q \in \mathbb{Q}$ and $s=(\alpha_{0},\dots,\alpha_{\mu}) \in F$. Then 
$$
\zeta_{s}(q):=h(s)q-l(s),
$$
where
$$
h(s):=2^{\left\Vert s\right\Vert } / 3^{\ell (s)}, \quad l(s):=\sum\limits_{k=0}^{\mu -1}2^{k+\alpha _{0}+\cdots+\alpha _{k}} / 3^{k+1}.
$$
\end{proposition}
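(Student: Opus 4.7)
The statement is that an arbitrary composition of $\zeta_+$'s and $\zeta_-$'s in the prescribed pattern is affine, with slope $h(s)$ and intercept $l(s)$. Since both $\zeta_+$ and $\zeta_-$ are themselves affine maps on $\mathbb{Q}$, their compositions are affine, so the only real content is identifying the slope and intercept. I would prove the statement by induction on $\ell(s)=\mu$.

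For the base case $\mu=0$, we have $s=(\alpha_{0})$ and $\zeta_{s}=\zeta_{+}^{\alpha_{0}}$, so $\zeta_{s}(q)=2^{\alpha_{0}}q$. Since $\|s\|=\alpha_{0}$ and $\ell(s)=0$, this matches $h(s)q-l(s)=2^{\alpha_{0}}q-0$ (the sum defining $l(s)$ is empty).

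For the inductive step, let $s=(\alpha_{0},\alpha_{1},\dots,\alpha_{\mu})$ and set $s''=(\alpha_{1},\dots,\alpha_{\mu})$, which has length $\mu-1$. The key decomposition is
$$\zeta_{s}=\zeta_{+}^{\alpha_{0}}\circ\zeta_{-}\circ\zeta_{s''},$$
obtained by peeling the outermost block $(\zeta_{-}\circ\zeta_{+}^{\alpha_{1}})$ off from $\zeta_{s}$ and recognizing $\zeta_{+}^{\alpha_{1}}\circ(\zeta_{-}\circ\zeta_{+}^{\alpha_{2}})\circ\cdots\circ(\zeta_{-}\circ\zeta_{+}^{\alpha_{\mu}})$ as $\zeta_{s''}$. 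By the induction hypothesis $\zeta_{s''}(q)=h(s'')q-l(s'')$, and applying $\zeta_{-}$ and then $\zeta_{+}^{\alpha_{0}}$ yields
$$\zeta_{s}(q)=\frac{2^{\alpha_{0}+1}h(s'')}{3}\,q\;-\;\frac{2^{\alpha_{0}}\bigl(2\,l(s'')+1\bigr)}{3}.$$

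The remaining task is algebraic bookkeeping. For the slope, I would plug in $h(s'')=2^{|s''|+\mu-1}/3^{\mu-1}$ and use $|s|=\alpha_{0}+|s''|$, $\ell(s)=\mu$ to check that $2^{\alpha_{0}+1}h(s'')/3=2^{\|s\|}/3^{\ell(s)}=h(s)$. For the intercept, I would separate the $k=0$ term of $l(s)$, giving $2^{\alpha_{0}}/3$, and reindex the remaining sum via $j=k-1$ to recognize it as $(2^{\alpha_{0}+1}/3)\,l(s'')$; summing, $l(s)=2^{\alpha_{0}}(2\,l(s'')+1)/3$, matching the intercept obtained above.

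The main obstacle is purely notational: keeping the index-shift inside the definition of $l(s)$ straight as one goes from $s''$ back to $s$. Everything else is mechanical evaluation of two affine maps. I therefore expect no genuine difficulties beyond carefully writing out the reindexing of the sum defining $l(s)$.
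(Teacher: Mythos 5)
Your induction is correct: the decomposition $\zeta_{s}=\zeta_{+}^{\alpha_{0}}\circ\zeta_{-}\circ\zeta_{s''}$ is valid by associativity of composition, the base case $\mu=0$ matches the empty sum for $l(s)$, and the recursions $h(s)=2^{\alpha_{0}+1}h(s'')/3$ and $l(s)=2^{\alpha_{0}}\bigl(2\,l(s'')+1\bigr)/3$ check out against the definitions (one can also sanity-check against the worked example $s=(0,0,3)$, $\zeta_{s}(1)=3$ in the Appendix). The paper itself supplies no proof — it reproduces the proposition from Wirsching \cite{gW1996} — so your argument is exactly the standard one that the reference would give, and there is no substantive difference of approach to report.
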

\begin{example}\label{X.10}
Let $m=3$. Then under $t$ we have 
$$
S=(3,5,8,4,2,1),
$$and by Definition \ref{D.8} we have
$$
s=(0,0,3); \quad \ell (s)=2; \quad \left\vert s\right\vert =3; \quad \left\Vert s\right\Vert =5; \quad q=1.
$$ 
Thus by Proposition \ref{P.7} we have
$$
\zeta_{s}(1)=\frac{2^{5}(1)}{3^{2}}-\frac{2^{0+0}}{3^{0+1}}-\frac{2^{1+0+0}}{3^{1+1}}=3,
$$ 
and we see that this rep is exactly the rep of $3$ in least terms as noted in Example \ref{E.1}. We call $s=(0,0,3)$ an admissible sequence as defined in Wirsching \cite{gW1996}. We can construct another admissible sequence by the concatenation $s\cdot w=(0,0,3+0,1)=(0,0,3,1)=s^{\ast }$, which corresponds to the inclusion of the first occurrence of the trivial cycle  
$$
S^{\ast}=(3,5,8,4,2,1,2,1).
$$
By Definition \ref{D.8} we have
$$
s^{\ast}=(0,0,3,1); \quad  \ell (s^{\ast})=3; \quad \left\vert s^{\ast}\right\vert =4; \quad \left\Vert s^{\ast}\right\Vert =7; \quad q=1.
$$
Thus by Proposition \ref{P.7} we have
$$
\zeta_{s^{\ast}}(1)=\frac{2^{7}(1)}{3^{3}}-\frac{2^{0+0}}{3^{0+1}}-\frac{2^{1+0+0}}{3^{1+1}}-\frac{2^{2+0+0+3}}{3^{2+1}}=3,
$$ 
which is equivalent to the next rep of $3$ in its number of terms and values of exponents, as given by Proposition (\ref{P.2}), and is noted in Example (\ref{E.1}) as well. Furthermore, let $q^{*}$ be the smallest (odd) value in a non-trivial cycle, should one exist. Then Proposition (\ref{P.7}) gives a rep for the cycle of the form
\begin{equation}\label{E:19}
\zeta_{s}(q^{*})=\left(2^{a_{k+1}}q^{*}-\sum\nolimits_{i=0}^{k}2^{a_{i}}3^{k-i}\right) / 3^{k+1}=q^{*},
\end{equation}
where $1<q^{*}\in \mathbb{N},k\in \mathbb{N}_{0}$, and $0= a_{0}<a_{1}<\cdots <a_{k}<a_{k+1}$. This formula is the same as Formula (7.3) in Crandall \cite{rC1978}, showing us that Proposition (\ref{P.7}) is equivalent to Theorem (7.1) in Crandall \cite{rC1978}.
\end{example}

Hence Proposition (\ref{P.3}) holds for Proposition (\ref{P.7}) for sequences which contain the trivial cycle, where $\left\Vert s\right\Vert =a_{k+1}$ and where the value $k$ carries the same meaning. Likewise, we have a rough comparison of Proposition (\ref{P.7}), rep (\ref{E.1}) and $E \cup O$ as follows
\begin{align}\notag
&\alpha_{k+1}+1=a_{k+1}-a_{k}=c_{k-1}+2b_{k}3^{k}\\ \notag
&\alpha_{k}+1=a_{k}-a_{k-1}=\upsilon_{k-1}+2b_{k-1}3^{k-1}\\ \notag
&\vdots\\ \notag
&\alpha_{2}+1=a_{2}-a_{1}=\upsilon_{1}+2b_{1}3^1\\ \notag
&\alpha_{1}+1=a_{1}-a_{0}=\upsilon_{0}+2b_{0}3^0. \notag
\end{align}

\begin{challenge}Derive a similar result to that of Theorem (\ref{T.3}) for Equation (\ref{E:19}) or generalize Theorem (\ref{T.3}).
\end{challenge}

Wirsching \cite{gW1996} gives the following definition of \textit{small sequences}.
\begin{definition}\label{D.9}
Denote by $L$ the set of finite sequences $\bigl\{(\alpha_{0},\dots,\alpha_{\mu})\in F\bigr\}$, where $\alpha_{i}<2\cdot3^{i-1}$ for $i=0,\dots,\mu$.
\end{definition} It should be clear that $L$ contains sequences which are not admissible (ones which do not evaluate to a natural number), and that it also contains admissible ones along with admissible duplicates per Proposition (\ref{P.2}). Note that
$$
\left\Vert s\right\Vert \leq \sum\nolimits_{i=1}^{\mu+1}\left(2\cdot3^{i-1}-1\right)+\mu+1=3^{\mu+1}-1,
$$ for $s \in L$. Clearly our definition of primitive seeds is essentially the same as the definition of small sequences since we fix $b_{i}=b_{0}=0$ to get
$$
\left\Vert s\right\Vert \leq c_{k}+\sum\nolimits_{i=1}^{k}\upsilon_{i}+\upsilon_{0}<3^{k+1}-1.$$
Note the subtle difference that our sum uses $<$ instead of $=$ as $E \cup O$ only enumerates every admissible sequence in $L$ (and thus every residue class), where it is understood that these sequences correlate with the ones containing the trivial cycle. It follows that $b_{i}$ and $b_{0}$ enumerate every positive-odd integer solution of every residue class, and therefore every odd natural number solution to rep (\ref{E.1}).
\begin{example}\label{X.11}It helps to recall the proof of corner case (\ref{E.5}) where we have 
$$
\sum\nolimits_{i=1}^{k}\upsilon_{i}+\upsilon_{0} =\sum\nolimits_{i=1}^{k}2\cdot3^{i}+2=3^{k+1}-1.
$$
where $\upsilon_{0}:=2$ as (\ref{E.5}) is in $O$. However, $c=3^{k+1}-1$ gives the solutions, not $c=2\cdot3^{k+1}$, and 
$$
3^{k+1}-1+3^{k+1}-1=2\cdot3^{k+1}-2<3^{k+2}-1.
$$
\end{example}

\vspace{2mm} \noindent \footnotesize
\begin{minipage}[b]{10cm}
Jeffrey R. GOODWIN, \\
Alumnus of St. Edward's University, \\
3001 South Congress Avenue, \\
Austin, Texas, 78704, USA. \\
Email: jeff.r.goodwin@austin.rr.com\\
Please contact the author for an electronic copy of his first paper.
\end{minipage}


\begin{thebibliography}{99}
\bibitem{lC1986}
L. Collatz, \textit{On the motivation and origin of the $(3n+1)$--Problem}, J. Qufu Normal University, Natural Science Edition, \textbf{12(3)} (1986) 9--11.

\bibitem{jcL2010}
J. C. Lagarias, \textit{The Ultimate Challenge: The $3x+1$ Problem}, American Mathematical Society, 2010.

\bibitem{jcL2011}
J. C. Lagarias, \textit{The $3x+1$ problem: An annotated bibliography ($1963$--$1999$)}, web document arXiv:math/0309224v13 [math.NT], available at arXiv.org: \url{http://arxiv.org/abs/math/0309224v13}.

\bibitem{jcL2012}
J. C. Lagarias, \textit{The $3x+1$ Problem: An Annotated Bibliography, II ($2000$--$2009$)}, web document arXiv:math/0608208v6 [math.NT], available at arXiv.org: \url{http://arxiv.org/abs/math/0608208v6}.

\bibitem{rC1978}
R. E. Crandall, \textit{On the \textquotedblleft$3x+1$" Problem}, Math. Comp., \textbf{32(144)} (1978) 1281--1292.

\bibitem{cBgS1978}
C. B\"{o}hm and G. Sontacchi, \textit{On the existence of cycles of given length in integer sequences like $x_{n+1} = x_{n}/2$ if $x_n$ even, and $x_{n+1} = 3x_{n}+1$ otherwise}, Atti della Accademia Nazionale dei Lincei. Rendiconti. Classe di Scienze Fisiche, Matematiche e Naturali, \textbf{64(8)} (1978) 260--264.

\bibitem{gW1996}
G. Wirsching, \textit{On the combinatorial structure of $3n + 1$ predecessor sets}, Discrete Math., \textbf{148} (1996) 265-286.

\bibitem{sA2000}
\c{S}. Andrei, M. Kudlek, and R. \c{S}. Niculescu, \textit{Some results on the Collatz Problem}, Acta Inform., \textbf{37} (2000) 145--160.

\bibitem{jA2006}
J. Amig\'{o}, \textit{Representing the integers with powers of $2$ and $3$}, Acta Inform., \textbf{43} (2006) 293--306.

\bibitem{jcL1990}
J. Lagarias, \textit{The set of rational cycles for the $3x+1$ Problem}, Acta Arith., \textbf{56} (1990) 33--53.

\bibitem{rB1998}
R. Blecksmith, M. McCallum, and J. Selfridge, \textit{$3$--smooth representations of integers}, Amer. Math. Monthly, \textbf{105(6)} (1998) 529--543.

\bibitem{jrG2003}
J. R. Goodwin, \textit{Results on the Collatz Conjecture}, Sci. Ann. Comput. Sci., \textbf{13} (2003) 1--16.

\bibitem{cC1984}
C. C. Cadogan, \textit{A note on the $3x+1$ Problem}, Caribb. J. Math. Comput. Sci., \textbf{3(2)} (1984) 67--72.

\bibitem{sA1998}
\c{S}. Andrei and C. Masalagiu, \textit{About the Collatz Conjecture}, Acta Inform., \textbf{35} (1998) 167--179.

\bibitem{cC1996}
C. C. Cadogan, \textit{Exploring the $3x+1$ Problem I}, aribb. J. Math. Comput. Sci., \textbf{6} (1996) 1--9.

\bibitem{khRG2000}
K. H. Rosen, \textit{Elementary Number Theory and its applications}, Addison Wesley Longman, Inc., 1999. 

\bibitem{mjD1982}
M. J. DeLeon, \textit{The congruence $x^{n} \equiv a \pmod m$, where $\bigl(n, \phi(m)\bigr) = 1$}, Fibonacci Quart., \textbf{20(2)} (1982) 129--132.

\bibitem{mjD1984}
M. J. DeLeon, \textit{$n^{\textit{th}}$ power residues congruent to one}, Fibonacci Quart., \textbf{22(4)} (1984) 358--365.

\bibitem{jSdW1991}
J. O. Shallit and D. W. Wilson, \textit{The \textquotedblleft$3x+1$" Problem and finite automata}, Bull. Eur. Assoc. Theor. Comput. Sci. EATCS, \textbf{46} (1991) 182--185.

\bibitem{gP2004}
G. Pfeiffer, \textit{Counting transitive relations}, J. Integer Seq., \textbf{7}.

\bibitem{njaS2011}
N. J. A. Sloane, \textit{The on-line encyclopedia of integer sequences}, published electronically at: \url{http://oeis.org}.

\bibitem{eR2004}
E. Roosendaal, \textit{On the $3x+1$ Problem}, web document, available at: \url{http://www.ericr.nl/wondrous/index.html}.

\bibitem{dA2002}
D. Applegate and J. C. Lagarias, \textit{Lower bounds for the total stopping time of $3x+1$ iterates}, Math. Comp., \textbf{72(242)} (2002) 1035--1049.

\bibitem{kgM2002}
K. G. Monks, \textit{$3x+1$ minus the $+$}, Discrete Math. Theor. Comput. Sci., \textbf{5(1)} (2002) 47--54.

\bibitem{pkM2008}
P. K. Mishra and V. S. Dimitrov, \textit{A combinatorial interpretation of Double Base Number System and some consequences}, Adv. Math. Commun., \textbf{2(2)} (2008) 159--173.

\bibitem{kGcS2010}
K. Gy\"{o}ry and C. Smyth, \textit{The divisibility of $a^n-b^n$ by powers of $n$}, Integers, \textbf{10} (2010) 319--334.

\end{thebibliography}
\end{document}